\documentclass[11pt]{amsart}

\usepackage[T1]{fontenc}
\usepackage[utf8]{inputenc}
\usepackage{amsmath,amssymb,amsfonts,amsthm}
\usepackage{mathtools}
\usepackage{enumitem}

\newtheorem{theorem}{Theorem}[section]
\newtheorem{lemma}[theorem]{Lemma}
\newtheorem{proposition}[theorem]{Proposition}
\newtheorem{corollary}[theorem]{Corollary}
\theoremstyle{definition}

\theoremstyle{remark}
\newtheorem{remark}[theorem]{Remark}

\newcommand{\N}{\mathbb N}

\newcommand{\sgn}{\operatorname{sign}}
\newcommand{\e}{\mathrm e}
\newcommand{\ii}{\mathrm i}
\newcommand{\Vlog}{V_{\log}}

\begin{document}

\title[Logarithmic estimates of Vilenkin Lebesgue constants]
{Two-sided logarithmic estimates of the Lebesgue constants with respect to Vilenkin systems}

\author{G. Tephnadze}
\address{G. Tephnadze, The University of Georgia, School of Science and Technology, 77a Merab Kostava St, Tbilisi, 0128, Georgia.}
\email{giorgitephnadze@gmail.com}

\date{}

\begin{abstract}
We introduce a new logarithmic digit variation for Vilenkin--Fourier partial sums and establish two-sided estimates for the corresponding Lebesgue constants with universal constants. These estimates hold for both bounded and unbounded Vilenkin systems.
As an application, we show that bounded logarithmic variation yields
uniform boundedness of the corresponding partial-sum operators on \(L_1\),
\(L_1\)-norm convergence for every integrable function, and almost
everywhere convergence.
\end{abstract}

\subjclass[2020]{42C10, 43A50}
\keywords{Vilenkin system, Dirichlet kernel, Lebesgue constant, two-sided estimate, logarithmic digit variation, partial sums, modulus of continuity, $L_1$-norm convergence, almost everywhere convergence, maximal operator, zero-dimensional group}

\maketitle

\section{Introduction}

In 1947 Vilenkin \cite{Vilenkin} introduced a large class of complete
orthonormal systems on zero-dimensional compact groups. These systems contain
the Walsh system as the special case $m_j=2$ for all $j$ and have become a
basic model in dyadic and martingale harmonic analysis; see, for example,
\cite{AVD,Golubov,Schipp,PTW}. Let $G_m$ be the Vilenkin group generated by
a sequence $m=(m_0,m_1,\ldots)$ of integers $m_j\ge2$. For
$f\in L_1(G_m)$, write
$$
\widehat f(k):=\int_{G_m}f\overline{\psi_k}\,d\mu,
\ \
S_nf:=\sum_{k=0}^{n-1}\widehat f(k)\psi_k=D_n\ast f,
\ \
D_n:=\sum_{k=0}^{n-1}\psi_k,
$$
and denote by
\[
L_n:=\|D_n\|_1
\]
the $n$th Lebesgue constant. The arithmetic structure of $n$ reflected in
$L_n$ is closely connected with boundedness, norm convergence, and
pointwise convergence of Vilenkin--Fourier partial sums.

For $1<p<\infty$,  \(L_p\)-norm convergence of Fourier series with respect to
Walsh and Vilenkin systems was developed by Schipp \cite{Schipp1976}, Simon
\cite{Simon1976}, and Young \cite{Young1976}. In this range the partial
sum operators are uniformly bounded in $L_p$, and the corresponding Fourier
series converge in norm. The endpoint $p=1$ is essentially different.
Watari \cite{Watari1958} (see also Gosselin \cite{Gosselin} and Young
\cite{Young1976}) obtained weak-type estimates for partial sum operators,
whereas uniform strong $L_1$ boundedness fails in general. In fact, Vilenkin
systems do not form Schauder bases in $L_1$; see \cite{AVD,Golubov}.

Approximation properties at this endpoint are naturally described by moduli
of continuity. 
For $f\in L_p(G_m)$, the integrated modulus of continuity at the Vilenkin
scale $1/M_r$ is
\begin{equation}\label{def:omega-p}
\omega_p\left(\frac1{M_r},f\right)
:=\sup_{h\in I_r}\|f(\cdot+h)-f(\cdot)\|_p.
\end{equation}
The standard approximation relations are (see Watari \cite{Watari}
and the discussion in \cite{BPT})
\begin{equation}\label{eq:omega-SM}
\frac12\,\omega_p\left(\frac1{M_r},f\right)
\le \|f-S_{M_r}f\|_p
\le \omega_p\left(\frac1{M_r},f\right).
\end{equation}
In particular,
\[
\omega_p\left(\frac1{M_r},f\right)\rightarrow0
\qquad (r\to\infty),\qquad 1\le p<\infty.
\]
Onneweer \cite{Onneweer} proved a sharp modulus-of-continuity criterion for
$L_1$-norm convergence of Walsh--Fourier series. For bounded Vilenkin
systems, Blahota, Persson and Tephnadze \cite{BPT} related the norm
behavior of subsequences of partial sums to digit variation and obtained
corresponding modulus-of-continuity estimates. Related sharp results in the
martingale $H_1$ setting were obtained by Tephnadze
\cite{Tephnadze2014}; see also the monograph \cite{PTW}.

The pointwise convergence problem has an even longer and more celebrated
history. Luzin \cite{Luzin} formulated the almost everywhere convergence
problem for Fourier series of $L_2$ functions. Kolmogorov
\cite{Kolmogorov} showed that no analogous theorem can hold for all
$L_1$ functions by constructing an integrable function whose trigonometric
Fourier series diverges almost everywhere. Carleson \cite{Carleson}
settled Luzin's problem affirmatively for $L_2$, and Hunt \cite{Hunt}
extended the result to $L_p$, $1<p<\infty$. Fefferman
\cite{Fefferman} subsequently gave a new proof based on estimates for the
maximal operator of partial sums. The Carleson--Hunt theorem has inspired
many further approaches and expositions, including those of Kahane
\cite{Kahane}, Lacey and Thiele \cite{LaceyThiele}, and Demeter
\cite{Demeter}.

Analogous questions for Walsh and Vilenkin systems have been studied by many
authors. Billard \cite{Billard} proved the Walsh analogue for $p=2$ and
Sj\"olin \cite{Sjolin} treated $1<p<\infty$, while Gosselin
\cite{Gosselin} obtained the corresponding result for bounded Vilenkin
systems. Martingale and tree-martingale methods developed by Schipp and
Weisz provide another important approach; see \cite{SchippWeisz}. A
modern martingale proof of the Vilenkin analogue of the Carleson--Hunt
theorem, together with a Kolmogorov-type divergence result at $p=1$, was
given by Persson, Schipp, Tephnadze and Weisz \cite{PSTW}. Near the
$L_1$ endpoint, Antonov \cite{Antonov} obtained a refined convergence
result for Walsh--Fourier series, and Oniani \cite{Oniani} established a
corresponding extrapolation theorem for bounded Vilenkin systems.

Lebesgue constants provide a natural bridge between the arithmetic of the
indices and norm and almost everywhere convergence. Earlier investigations
of generalized Walsh systems associated with products of cyclic groups, 
including the case of unbounded orders, were carried out
by Price \cite{Price}. 
For bounded Vilenkin systems, estimates of $L_n$ in
terms of digit variations were obtained by Lukyanenko \cite{Lukyanenko} and
Lukomskii \cite{Lukomskii}, and later sharpened in \cite{BPT}. 
In particular, if
$
n=\sum_{j=0}^{\infty}n_jM_j,$ 
and $
\delta_j:=\sgn(n_j),
$ 
$$
v(n):=\delta_0+\sum_{j=0}^{\infty}|\delta_{j+1}-\delta_j|,
$$
then the bounded-system estimates involve, besides $v(n)$, the additional
quantity
\[
v^*(n):=\sum_{j=0}^{\infty}|\ominus n_j-1|\,\delta_j.
\]

For subsequences of Walsh--Fourier partial sums, bounded digit variation is
a classical sufficient condition for almost everywhere convergence, although
Konyagin \cite{Konyagin} showed that it is not necessary. In the general
Vilenkin setting, Areshidze, Persson and Tephnadze \cite{APT} proved,
for bounded as well as unbounded systems, a weak-type $(1,1)$ estimate and
almost everywhere convergence of $\{S_{\alpha_k}f\}$ under the condition
\[
\sup_k\bigl(v(\alpha_k)+v^*(\alpha_k)\bigr)<\infty.
\]
Their proof is based on a pointwise decomposition of Dirichlet kernels and
the weak-type $(1,1)$ inequality for the martingale maximal operator
\begin{equation*}
\mathcal Mf(x):=
\sup_{j\in\N}S_{M_j}|f|(x)
=
\sup_{j\in\N}M_j\int_{I_j}|f(x-t)|\,d\mu(t).
\end{equation*}
In particular, there exists an absolute constant $C$ such that (see e.g. 
\cite{PTW,Shalukhina})
\begin{equation}\label{eq:mart-max-weak}
\sup_{\lambda>0}\lambda\,
\mu\{x\in G_m:\mathcal Mf(x)>\lambda\}
\le C\|f\|_1.
\end{equation}
This estimate is valid without any boundedness assumption
on the generating sequence $m$.

More recently, Pe\v{c}enkovi\'c \cite{Pecenkovic} derived an explicit
formula for the Lebesgue constants on general Vilenkin groups in terms of
the nonzero digits of the index and certain finite exponential sums.

The purpose of the present paper is different: we identify a simple
logarithmic digit variation which is uniformly equivalent to the Lebesgue
constants, with absolute constants independent of the generating sequence.
This provides a direct description of their order of magnitude and leads
to applications to norm convergence and almost everywhere convergence.

For $n_j\ne0$, put
$
d_j:=\min\{m_j-n_j-1,n_j+1\},
$
and define
\begin{equation*}
\Vlog(n):=v(n)+\sum_{\substack{j\ge0\\ n_j\ne0}}\log(1+d_j).
\end{equation*}

For every nonzero digit \(n_j\), we have
$
d_j\le m_j-n_j-1,
$ 
$
\log(1+d_j)\le d_j.$
Hence
$
V_{\log}(n)
\le
v(n)+v^*(n).
$
Thus boundedness of \(v(n)+v^*(n)\) implies boundedness of
\(V_{\log}(n)\), while the converse need not hold for unbounded
Vilenkin systems.

Our main result is the following.

\begin{theorem}\label{thm:main} 
Let \(G_m\) be an arbitrary Vilenkin group, where only \(m_j\ge2\) is assumed. Then, for every \(n\in\mathbb N\), 
\begin{equation}\label{eq:main} 
\frac{1}{384\pi}\,V_{\log}(n) \le L_n=\|D_n\|_1 \le \frac{7}{2}\,V_{\log}(n).
\end{equation} 
\end{theorem}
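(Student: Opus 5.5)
The plan is to start from the standard Paley-type decomposition of the Vilenkin--Dirichlet kernel. For $n=\sum_j n_jM_j$ we have
\[
D_n=\psi_n\sum_{j\ge0}D_{M_j}\sum_{u=m_j-n_j}^{m_j-1} r_j^{u},
\]
equivalently $D_n=\sum_j \bigl(\prod_{k>j} r_k^{n_k}\bigr)D_{M_j}\sum_{u=0}^{n_j-1}r_j^u$. Here $D_{M_j}=M_j\mathbf 1_{I_j}$ and $r_j$ is the $j$th generalized Rademacher function, which is constant on cosets of $I_{j+1}$ and takes the value $e^{2\pi\ii x_j/m_j}$. On the annulus $I_j\setminus I_{j+1}$, i.e.\ where $x_0=\dots=x_{j-1}=0$ and $x_j\ne0$, only the terms with index $\le j$ survive. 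The terms with index $<j$ combine to a function of size $\sim M_j$ times a bounded factor, and the $j$th term contributes $M_j$ times the geometric sum $\sum_{u<n_j}e^{2\pi\ii u x_j/m_j}$. Since $|I_j\setminus I_{j+1}|$ is comparable to $1/M_j$, each level contributes an amount of order one times an average over $x_j$ of a geometric sum.

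\emph{Upper bound.} On $I_j\setminus I_{j+1}$ I will bound $|D_n|$ by $M_j$ times the modulus of the geometric sum $\bigl|\sum_{u=0}^{n_j-1}\e^{2\pi\ii u x_j/m_j}\bigr|$, plus $M_j$ for the lower levels (whose values are essentially $D_{n^{(j)}}$ restricted there). On $I_N$, the top scale, the contribution is at most $1$. The average over $x_j\in\{1,\dots,m_j-1\}$ of $|\sin(\pi n_jx_j/m_j)/\sin(\pi x_j/m_j)|$ is at most $C\log(1+\min(n_j,m_j-n_j))$. Using the symmetry $n_j\leftrightarrow m_j-n_j$, which comes from rewriting the geometric sum as the complement of the full sum, this gives the $\log(1+d_j)$ term. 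The cruder terms of size $M_j\cdot$(measure) must be charged to $v(n)$. Only those levels where the digit pattern switches between zero and nonzero, or where $n_j\neq0$, carry a nontrivial constant, and each block of consecutive nonzero digits must be counted by its endpoints. This is exactly the telescoping trick of \cite{BPT}: write the partial sum over a run of nonzero digits through $D_{M_{j+1}}-D_{M_j}$-type differences so that interior levels contribute only their log term. Tracking constants should give $7/2$.

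\emph{Lower bound.} I will restrict attention to the disjoint sets $E_j=I_j\setminus I_{j+1}$ (for $n_j\ne0$), further restricted in the coordinates $x_{j+1},x_{j+2},\dots$ so that the higher-level factor is harmless. On $E_j$ I will write $D_n=A_j+B_j$, where $B_j$ is the $j$th geometric-sum term and $A_j$ collects the lower-index terms; $A_j$ is constant in $x_j$ up to the factor $\psi$. The lower bound $\int_{E_j}|D_n|\ge c\log(1+d_j)-C'$ will then follow from the log-size of the average of $|\sin(\pi n_jk/m_j)/\sin(\pi k/m_j)|$. I will get it by restricting to $k\le m_j/(2n_j)$-type ranges, or by using the Fejér-type orthogonality argument $\sum_k|g(k)|\ge|\sum_k g(k)\overline{\phi(k)}|/\|\phi\|_\infty$ with a suitable test phase; the latter produces the $1/\pi$. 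Subtracting the bounded contribution of $A_j$ is a problem, since it could cancel the log when $d_j$ is small. To handle this I will split: if $\log(1+d_j)$ exceeds a large constant, the log dominates; otherwise I charge $d_j$-level terms to $v(n)$. For the $v(n)$ part I will use the classical argument that at each transition index $j$ (where $\delta_j\ne\delta_{j+1}$) there is a set of measure $\asymp 1/M_j$ with $|D_n|\gtrsim M_j$. Taking only every other, say every fourth, transition gives disjoint sets and avoids interference, which explains the loss of constants ($384=4\cdot 96$, etc.). Finally I will combine via $\max(a,b)\ge(a+b)/2$.

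The main obstacle is the lower bound: I need to show that the lower-level sum $A_j$ cannot cancel the geometric-sum term on a positive proportion of $E_j$, uniformly in an unbounded $m_j$. My first attempt will use the fact that $A_j$ does not depend on $x_j$ while $B_j$ oscillates in $x_j$. Then, for each fixed choice of the other coordinates, $\sum_{x_j}|A_j+B_j|\ge\sum_{x_j}|B_j|-|\sum_{x_j}B_j|$-type estimates, or a mean-zero argument ($B_j$ summed over all $x_j$ including $0$ is controlled), should neutralize $A_j$.
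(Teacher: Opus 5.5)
Your upper bound follows the paper's route (BPT telescoping over blocks plus a cyclic logarithmic estimate), up to an index issue noted at the end. The lower bound, however, has a genuine gap at the digits with small but nonzero $d_j$.

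\paragraph{The gap in the lower bound.} You obtain $\int_{I_j\setminus I_{j+1}}|D_n|\ge c\log(1+d_j)-C'$ and propose to charge every digit at which $\log(1+d_j)$ is below a threshold to $v(n)$. This cannot work. Each digit with $d_j\ge1$ contributes at least $\log2$ to $V_{\log}(n)$, and a single block may contain arbitrarily many such digits. For instance, if $m_j=3$ for all $j$ and $n=\sum_{j<J}M_j$, then $d_j=1$ for $j<J$, $v(n)=2$ and $V_{\log}(n)=2+J\log 2$, so you must show $L_n\gtrsim J$. What is required is a contribution bounded below by an absolute constant at \emph{every} digit with $d_j\ge1$. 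At the same time there must be no loss at the digits with $d_j=0$ (i.e.\ $n_j=m_j-1$), whose number is not controlled by $V_{\log}(n)$.

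Your suggested remedy does not produce this. On $I_j\cap\{x_j=a\}$, $a\ne0$, one has $|D_n|=M_j\bigl|\sum_{u<n_j}\omega^{ua}+t_j\omega^{n_ja}\bigr|$, with $\omega=e^{2\pi i/m_j}$ and $t_j=N_{j-1}/M_j\in[0,1)$. So the lower-order part rotates with $a$ relative to the geometric sum; it is not independent of $x_j$. The inequality $\sum_a|A+B_a|\ge\sum_a|B_a|-\bigl|\sum_aB_a\bigr|$ that you sketch is false in general (take $A=1$ and $(B_a)=(-1,-1,2)$). More basically, your restricted-range or Fej\'er-type argument targets a lower bound of order $c\log(1+d_j)$ for the average of $|\sin(\pi n_ja/m_j)/\sin(\pi a/m_j)|$. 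Such a bound cannot survive an interference that may cost a full unit.

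\paragraph{What is missing.} You need to estimate the \emph{excess over one}, with an exact accounting in which the interference costs exactly one. The paper replaces annuli by the increments $L_{N_j}-L_{N_{j-1}}$, where $N_j=\sum_{\ell\le j}n_\ell M_\ell$. Since $|D_{N_j}|=|D_{N_{j-1}}|$ off $I_j$ and $D_{N_{j-1}}\equiv N_{j-1}$ on $I_j$, each increment equals $F_{m_j}(n_j,t_j)\ge0$, so $L_n=\sum_{n_j\ne0}F_{m_j}(n_j,t_j)$ (Proposition~\ref{prop:recursion}). By contrast, your annulus integral equals $F_{m_j}(n_j,t_j)+t_j-t_{j+1}$. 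The shift $t_j-t_{j+1}$ can be close to $-1$ at a single level and only telescopes after summation.

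Writing $\sum_{s<p}\omega^{sa}+t\omega^{pa}=\sum_{s\le p}\omega^{sa}-(1-t)\omega^{pa}$ gives $F_m(p,t)\ge\mathcal L_m(p+1)-1$. Averaging gives $F_m(p,t)\ge1-t\ge\frac12$ at the first digit of each block, which yields the $v(n)$ part without any disjointness argument. The remaining input is Lemma~\ref{lem:calL-lower}:
$\mathcal L_m(q)-1\ge\frac{1}{96\pi}\log(1+\min\{q,m-q\})$ for $2\le q\le m$. This includes a uniform positive constant for bounded $d$, and it follows from the identity $\mathcal L_m(q)-1=\frac1m\sum_a\bigl(|P_q(a)|-\Re P_q(a)\bigr)$, $P_q(a)=\sum_{u<q}\omega^{ua}$, whose summands are nonnegative. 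Note that $q=n_j+1$ yields exactly $d_j=\min\{n_j+1,m_j-n_j-1\}$.

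\paragraph{The index issue in your upper bound.} The same shift of index is needed there. An annulus estimate in terms of $\min\{n_j,m_j-n_j\}$ gives a term $C\log2$ at a digit $n_j=m_j-1$, although $d_j=0$. For $n=M_J-1$ this totals order $J$, whereas $L_n\le2$. After the telescoping, the residual at level $j$ is $D_{M_j}\sum_{u=1}^{m_j-n_j-1}r_j^u$. Its norm $\Lambda_{m_j}(m_j-n_j-1)$ is at most $\frac72\log(1+d_j)$ by Lemma~\ref{lem:Lambda-upper}, and that is the term you must estimate.
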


Theorem~\ref{thm:main} shows that $\Vlog(n)$ is the correct uniform
arithmetic scale for Lebesgue constants even when the local bases $m_j$ are
unbounded. It also leads directly to norm convergence. We prove a
quantitative estimate in terms of \eqref{def:omega-p} and show that, for
every increasing sequence $\{\alpha_k\}$, boundedness of
$\Vlog(\alpha_k)$ is equivalent to uniform boundedness of
$S_{\alpha_k}:L_1\to L_1$ and, when $\alpha_k\to\infty$, to
\[
\|S_{\alpha_k}f-f\|_1\longrightarrow0
\qquad\text{for every }f\in L_1(G_m).
\]
Thus the digit variation controlling the Lebesgue constants also determines
the $L_1$ behavior of subsequences.

The same logarithmic variation controls almost everywhere convergence.

\begin{theorem}\label{thm:ae}
Let $G_m$ be an arbitrary Vilenkin group, bounded or unbounded, and let
$\{\alpha_k:k\in\N\}$ be a strictly increasing sequence of positive
integers. Assume that
\begin{equation}\label{eq:Vlog-bounded}
K:=\sup_{k\in\N}\Vlog(\alpha_k)<\infty.
\end{equation}
Then there exists a constant $C_K>0$, depending only on $K$, such that for
every $f\in L^1(G_m)$,
\begin{equation}\label{eq:weak-subsequence}
\sup_{\lambda>0}\lambda\,
\mu\left\{x\in G_m:
\sup_{k\in\N}|S_{\alpha_k}f(x)|>\lambda\right\}
\le C_K\|f\|_1.
\end{equation}
Consequently,
\begin{equation}\label{eq:ae-subsequence}
S_{\alpha_k}f(x)\rightarrow f(x)
\qquad\text{for a.e. }x\in G_m \qquad \text{as } \qquad  k\to\infty.
\end{equation}

\end{theorem}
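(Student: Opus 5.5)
We reduce everything to a pointwise estimate of $D_n$ that the martingale maximal operator can handle. The plan is to bound $|D_n(x)|$, for $n=\sum_j n_jM_j$, by a sum over the nonzero digits and the variation blocks of terms of the form $c\,M_j\mathbf 1_{I_j}$ or smeared versions of them. Convolution with such a term is then dominated by $\mathcal M f$ times a weight controlled by $\Vlog(n)$. Once this is done, the weak-type estimate \eqref{eq:weak-subsequence} follows from \eqref{eq:mart-max-weak}, and almost everywhere convergence follows by the standard density argument.

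\textbf{Step 1: decomposing the kernel.} We start from the usual identity
\[
D_n=\psi_n\sum_{j}D_{M_j}\sum_{s=m_j-n_j}^{m_j-1}r_j^{\,s},
\]
where $r_j$ is the $j$th generalized Rademacher function. Equivalently, we may group consecutive nonzero digits into blocks, as in the $v(n)$ part of the proof of Theorem~\ref{thm:main}. Write $x\in I_j\setminus I_{j+1}$ with $x_j\neq0$. The inner geometric sum has modulus at most
\[
\min\Bigl\{n_j,\; m_j-n_j,\; \frac{1}{|\sin(\pi x_j/m_j)|}\Bigr\}.
\]
Hence the contribution of digit $j$ is bounded by
\[
M_j\,\mathbf 1_{I_{j+1}}\cdot \min(n_j,m_j-n_j)
\;+\;M_j\sum_{x_j\neq0}\mathbf 1_{I_{j+1}(x_j)}\min\Bigl\{d_j+1,\frac{m_j}{|x_j|}\Bigr\}.
\]
The first part needs care. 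A direct bound of $M_j\mathbf 1_{I_{j+1}}\cdot d_j$ has mass of order $d_j/m_j$, which is harmless. Pointwise, however, $S_{M_{j+1}}|f|$ carries the factor $M_{j+1}/M_j=m_j$, so we write this piece as $(d_j/m_j)\,M_{j+1}\mathbf 1_{I_{j+1}}$, which contributes at most $\mathcal Mf$.

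\textbf{Step 2: controlling convolutions with $\mathcal M f$.} For the second part we use that the kernel is radially decreasing in $|x_j|$ on the $m_j$ subcosets of $I_j$. Layer-cake (summation by parts over $|x_j|$) writes it as a superposition of averages over unions of subcosets of size $k\le m_j$. Each such average is \emph{not} a martingale average, since these unions are not subgroups. We therefore replace it by the full average over $I_j$: convolution with $M_j\mathbf 1_{I_{j+1}(x_j)}$ is bounded by $m_j\,S_{M_j}|f|$. That factor is too large, so this crude replacement fails. Instead we split into two ranges. Where $|x_j|\le d_j$, the coefficient is $d_j+1$, and each coset average is bounded by the dominating sum
\[
\sum_{|x_j|\le d_j} M_j \mathbf 1_{I_{j+1}(x_j)}\le (2d_j+1)\,M_j\mathbf 1_{I_j}\cdot\frac{m_j}{m_j}
\]
over a union of $2d_j+1$ cosets, which gives $\lesssim d_j\,\mathcal M f$. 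That is still linear in $d_j$, not logarithmic.

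\textbf{Main obstacle: the weak-type bound for the $1/|x_j|$ tail.} The real difficulty, and the reason for the logarithm, is this tail. We would handle it not pointwise but directly in weak type, bounding the operator
\[
T_jf=f*\Bigl(M_j\sum_{x_j\ne0}\mathbf 1_{I_{j+1}(x_j)}\min\{d_j+1,m_j/|x_j|\}\Bigr).
\]
Its kernel has $L_1$ norm $\asymp 1+\log(1+d_j)$. Its $L_1\to L_1$ norm is therefore $\lesssim\log(1+d_j)$, and a supremum over $k$ of operators whose kernel sits at a common scale $j$ can be bounded by the single operator $f\mapsto |f|*(\text{envelope})$. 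The key observation is that for each $j$ the envelope does not depend on $n$ beyond $d_j$, and $\Vlog(\alpha_k)\le K$ forces $d_j\le e^{K}$ and at most $K$ nonzero-digit blocks for each $\alpha_k$. Thus $\sup_k |S_{\alpha_k}f|$ is dominated by $\sup_j$ of $|f|*H_j$, where $H_j$ is the fixed envelope at scale $j$ with $d=e^K$. Each $H_j$ is a sum of at most $2e^K+1$ coset indicators scaled by $M_j$, plus a remainder $\sum_{|x_j|>e^K}(m_j/|x_j|)M_j\mathbf 1_{I_{j+1}(x_j)}$. The remainder we treat by a Calderón--Zygmund decomposition adapted to the filtration. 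On the bad part, with $b$ of mean zero on an atom $I_N(y)$, the convolution vanishes for $j<N$, while for $j\ge N$ it is supported in $I_N(y)$ after translation, which we discard via the exceptional set. Translated coset indicators are handled the same way, giving a constant of order $e^K$. This gives \eqref{eq:weak-subsequence} with $C_K\lesssim e^{K}$. The statement allows any constant depending on $K$, so this loss is acceptable.

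For almost everywhere convergence, \eqref{eq:ae-subsequence} holds for Vilenkin polynomials, which are dense in $L_1$, since $S_{\alpha_k}P=P$ for large $k$. The weak-type maximal inequality then extends it to all of $L_1$ by the standard argument.
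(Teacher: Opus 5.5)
\textbf{There is a genuine gap, in Steps 1--2.} You take absolute values digit by digit and then assert that $\sup_k|S_{\alpha_k}f|$ is dominated by $\sup_j |f|*H_j$. What Step 1 actually gives is $|S_nf|\le\sum_{j:\,n_j\ne0}|f|*H^{(j)}_n$. This is a \emph{sum} over all scales carrying a nonzero digit, and the hypothesis $\Vlog(n)\le K$ does not bound the number of such scales.

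The hypothesis bounds the number $s\le K/2$ of blocks and the number of digits with $d_j\ge1$. It says nothing about digits with $n_j=m_j-1$: these have $d_j=0$ and are invisible in $\Vlog$ except at block ends. For such a digit the piece is
$D_{M_j}\sum_{u=1}^{m_j-1}r_j^u=D_{M_{j+1}}-D_{M_j}$.
Its modulus is $M_{j+1}-M_j$ on $I_{j+1}$ and $M_j$ on $I_j\setminus I_{j+1}$. (Incidentally, on $I_{j+1}$ your inner sum equals $n_j$, not $\min(n_j,m_j-n_j)$.)

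Now take $\alpha_k=M_k-1$ ($k\ge1$), so that $\Vlog(\alpha_k)=2$. Your digit-wise envelope of $D_{\alpha_k}$ has $L_1$ norm at least $\sum_{j<k}(1-1/m_j)\ge k/2$. Hence for $f\equiv1$ your majorant is at least $k/2\to\infty$ everywhere, while $\sup_k|S_{\alpha_k}f|=1$. No pointwise bound by $C_K\mathcal Mf$, and no weak-type bound, can be extracted from such a majorant.

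\textbf{The missing idea is to use cancellation across runs of digits before taking absolute values.} The paper splits the nonzero digits into $\mathcal A(n)$ (where $n_j+1\le m_j-n_j-1$) and $\mathcal B(n)$.
\begin{itemize}
\item Each $\mathcal B$-digit is written as the full sum $\sum_{u=1}^{m_j-1}r_j^u$ minus a short sum of $d_j$ terms.
\item The full sums over each maximal run $[a,b]$ of $\mathcal B(n)$ telescope into $D_{M_{b+1}}-D_{M_a}$.
\item What remains is at most $2R_{\mathcal B}+\sum_j d_j$ unimodular multiples of martingale kernels $D_{M_j}$. Here $R_{\mathcal B}\le s+\#\mathcal A(n)\le K/2+K/\log2$ and $d_j\le \e^K-1$.
\item Each of these terms contributes at most $\mathcal Mf(x)$. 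This gives $|S_nf|\le A_K\mathcal Mf$ pointwise, and then \eqref{eq:weak-subsequence} follows from \eqref{eq:mart-max-weak}.
\end{itemize}

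Your ``main obstacle'' is not actually an obstacle. Since $d_j\le \e^K-1$, the weight $\min\{d_j+1,m_j/|x_j|\}$ never exceeds $\e^K$. So each digit with $d_j\ge1$ contributes $O(\e^K)\,\mathcal Mf$ directly. A bound linear in $d_j$ is harmless, because $C_K$ may depend on $K$ arbitrarily. Neither the logarithmic $L_1$ estimate nor a Calder\'on--Zygmund decomposition is needed.

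Your density argument for \eqref{eq:ae-subsequence} is fine.
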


For unbounded Vilenkin systems, condition \eqref{eq:Vlog-bounded} may be
substantially weaker than the earlier condition involving $v+v^*$. For
example, for $\alpha_k=M_k$ one has
$\sup_k\Vlog(M_k)<\infty$, whereas $v^*(M_k)=m_k-2$ may be unbounded.
Thus Theorem~\ref{thm:ae} treats the martingale subsequence
$\{S_{M_k}\}$ uniformly even when the local bases are unbounded.

The proof of the two-sided estimate for the Lebesgue constants has two main steps. First, for a finite cyclic group of order $m$, we estimate the
normalized $L_1$-norm of a finite geometric sum by a logarithm of the shorter
cyclic length. Second, we derive an exact digit recursion for the increase of
$L_n$. The latter is particularly useful for the lower estimate because the
individual digit contributions are nonnegative and can therefore be summed
without cancellation.
Thus, the logarithmic digit variation introduced in this paper provides a unified criterion governing the size of Lebesgue constants, norm convergence in $L_1$, and almost everywhere convergence of subsequences for general Vilenkin systems.

The paper is organized as follows. Section~2 contains the notation and basic
identities for Vilenkin systems. Section~3 is devoted to finite cyclic
Lebesgue constants. In Section~4 we establish the exact digit recursion and
prove Theorem~\ref{thm:main}. Section~5 contains the $L_1$-norm convergence
results and their connection with the modulus of continuity. Finally, in
Section~6 we prove the maximal inequality and almost everywhere convergence
asserted in Theorem~\ref{thm:ae}.

\section{Preliminaries}

Let $\N_+:=\{1,2,\ldots\}$ and $\N:=\N_+\cup\{0\}$. Let
$m=(m_0,m_1,\ldots)$ be an arbitrary sequence of integers satisfying
$
m_j\ge2$ $(j\in\N).
$
No assumption is made on $\sup_jm_j$.
For each $j\in\N$, let
$$
Z_{m_j}:=\{0,1,\ldots,m_j-1\}
$$
be the additive cyclic group modulo $m_j$, and set
\[
G_m:=\prod_{j=0}^{\infty}Z_{m_j}.
\]
The group $G_m$ is equipped with the product topology and the normalized
Haar measure $\mu$, where each point of $Z_{m_j}$ has measure $1/m_j$.

For $x=(x_0,x_1,\ldots)\in G_m$, define
\[
I_0(x):=G_m,
\qquad
I_N(x):=\{y\in G_m:y_0=x_0,\ldots,y_{N-1}=x_{N-1}\},
\]
and write $I_N:=I_N(0)$. Then
$
\mu(I_N)=\frac1{M_N},
$
where
$$
M_0:=1,
\qquad
M_{N+1}:=m_NM_N.
$$
Every $n\in\N$ has a unique expansion
\begin{equation}\label{eq:digits}
n=\sum_{j=0}^{\infty}n_jM_j,
\qquad
0\le n_j<m_j,
\end{equation}
with only finitely many nonzero digits. 
For $n\ne0$ we put
$$
|n|:=\max\{j:n_j\ne0\}.
$$

The generalized Rademacher functions are
\[
r_j(x):=\exp\!\left(\frac{2\pi\ii x_j}{m_j}\right),
\]
and the Vilenkin functions are
\[
\psi_n(x):=\prod_{j=0}^{\infty}r_j(x)^{n_j},
\]
where $n$ is represented as in \eqref{eq:digits}. 

The Vilenkin system
$\{\psi_n:n\in\N\}$ is a complete orthonormal system in $L_2(G_m)$; see,
for example, \cite{AVD,Vilenkin}.
It is known that (see \cite{PTW})
\begin{equation}\label{eq:r}
\frac{1}{m}\sum_{u=0}^{m-1}
e^{\frac{2\pi \mathrm{i}ua}{m}}
=
\begin{cases}
1, & a\equiv 0 \pmod m,\\[2mm]
0, & a\not\equiv 0 \pmod m.
\end{cases}
\end{equation}

We also need the following identities for Dirichlet kernels (for details see \cite{PTW}):
For every $N\in\N$,
\begin{equation}\label{eq:DM}
D_{M_N}(x)=
\begin{cases}
M_N,&x\in I_N,\\
0,&x\notin I_N.
\end{cases}
\end{equation}
In particular,
\begin{equation}\label{eq:normDM}
\|D_{M_N}\|_1=1.
\end{equation}

If $0\le b<M_j$
and $1\le p<m_j$, then
\begin{equation}\label{eq:blockD}
D_{pM_j+b}
=
D_{M_j}\sum_{s=0}^{p-1}r_j^s+r_j^pD_b.
\end{equation}
Let $n=\sum_jn_jM_j.$ Then
\begin{equation}\label{eq:blockD1}
D_n = \psi_n\sum_{j=0}^{\infty} D_{M_j}\sum_{u=m_j-n_j}^{m_j-1}r_j^u.
\end{equation}

\section{Finite cyclic Lebesgue constants}

For $m\ge2$ and $0\le r\le m-1$, define
\begin{equation*}
\Lambda_m(r):=
\frac1m\sum_{a=0}^{m-1}
\left|
\sum_{u=1}^{r}\exp\!\left(\frac{2\pi\ii ua}{m}\right)
\right|,
\qquad
\Lambda_m(0):=0.
\end{equation*}
The following upper estimate will be used in the proof of the upper half of
Theorem~\ref{thm:main}.

\begin{lemma}\label{lem:Lambda-upper}
Let $m\ge2$ and $0\le r\le m-1$. Put
$
d:=\min\{r,m-r\}.
$
Then
\begin{equation*}
\Lambda_m(r)\le \frac72\log(1+d).
\end{equation*}
\end{lemma}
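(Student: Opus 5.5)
The plan is to pass to the closed form of the geometric sum and then use a Dirichlet-kernel type estimate: bound each term by $\min(d,\,\cdot/a)$ and sum a harmonic series. The case $r=0$ is trivial, so assume $1\le r\le m-1$, hence $d\ge1$. The $a=0$ term of $\Lambda_m(r)$ equals $r/m<1$. For $1\le a\le m-1$ we have
\[
\Bigl|\sum_{u=1}^{r}\exp\!\Bigl(\frac{2\pi\ii ua}{m}\Bigr)\Bigr|
=\frac{|\sin(\pi ra/m)|}{\sin(\pi a/m)} .
\]
Since $r\equiv \pm d \pmod m$, we get $|\sin(\pi ra/m)|=|\sin(\pi da/m)|$. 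This is the point where the minimum $d=\min\{r,m-r\}$ enters. Thus
\[
\Lambda_m(r)=\frac rm+\frac1m\sum_{a=1}^{m-1}\frac{|\sin(\pi da/m)|}{\sin(\pi a/m)} .
\]

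Next I would use the symmetry $a\mapsto m-a$, which leaves each summand invariant. This bounds the sum by twice the sum over $1\le a\le m/2$. On that range $\sin(\pi a/m)\ge 2a/m$ by Jordan's inequality. Combined with $|\sin(\pi da/m)|\le\min\{1,\pi da/m\}$, or more simply $|\sin(dx)|\le d\sin x$ on $[0,\pi/2]$, each term is at most $\min\{d,\,m/(2a)\}$. Splitting at $a\approx m/(2d)$ then gives
\[
\frac2m\sum_{1\le a\le m/2}\min\Bigl\{d,\frac m{2a}\Bigr\}
\le \frac2m\cdot d\cdot\frac m{2d}+\sum_{m/(2d)<a\le m/2}\frac1a
\le 1+\log d+c ,
\]
where $c$ is a small rounding constant from comparing the harmonic tail with $\int dt/t$; taking the first excluded index into account, $c\le 1/2$ should suffice. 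Altogether this yields $\Lambda_m(r)\le 2+\tfrac12+\log d$, up to the exact bookkeeping.

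Finally I would compare this with $\tfrac72\log(1+d)$. For $d\ge2$ we have $\log d\le\log(1+d)$, and $\tfrac52\log(1+d)\ge\tfrac52\log3>2.7$, which absorbs the additive constant. The case $d=1$, i.e.\ $r=1$ or $r=m-1$, must be done by hand, since there the target is only $\tfrac72\log2\approx2.43$. For $r=1$ every summand has modulus $1$, so $\Lambda_m(1)=1$. For $r=m-1$ the inner sum equals $-1$ for $a\ne0$, so $\Lambda_m(m-1)=\frac{m-1}{m}+\frac{m-1}{m}<2$. Both values are below $2.43$.

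I expect the main obstacle to be the careful handling of the rounding in the harmonic-sum step, namely the integer parts of $m/(2d)$ and $m/2$, and the small cases $m=2,3$ or $d$ close to $m/2$. The constant must stay small enough that the $d=2$ case, where $\tfrac72\log3\approx3.85$ is the target, still closes. If the crude bound $2.5+\log 2\approx3.19$ turns out to be too loose after the honest rounding, I would instead treat $d=2$ directly with $|\sin 2x|/\sin x=2|\cos x|$. That gives $\Lambda_m(r)\le 1+\frac1m\sum_a 2|\cos(\pi a/m)|\le 1+\frac4\pi+o(1)$, which is comfortably below $3.85$.
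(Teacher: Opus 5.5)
Your proposal is correct and follows essentially the paper's route: the closed-form geometric sum, the identity $|\sin(\pi ra/m)|=|\sin(\pi da/m)|$, the symmetry $a\leftrightarrow m-a$ with Jordan's inequality to reach a sum of $\min\{2d/m,1/a\}$, a harmonic-tail estimate, and a direct computation for $d=1$. Your rounding worry is unfounded: since $d\le m/2$ the first tail index is at least $2$, so the tail is at most $\tfrac12+\log d$, and $\Lambda_m(r)<\tfrac52+\log d\le\tfrac72\log(1+d)$ for $d\ge2$ without any $d=2$ fallback (the paper reaches the analogous $\tfrac52+\log(\pi d/2)$ by treating $r>m/2$ separately).
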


\begin{proof}
The assertion is trivial when $r=0$. Assume $1\le r\le m-1$. For
$1\le a\le m-1$, the finite geometric-sum formula gives
\[
\left|\sum_{u=1}^{r}\e^{2\pi\ii ua/m}\right|
=
\frac{|\sin(\pi ra/m)|}{\sin(\pi a/m)}.
\]
Hence
\begin{equation}\label{eq:Lambda-sine}
\Lambda_m(r)
=
\frac rm+
\frac1m\sum_{a=1}^{m-1}
\frac{|\sin(\pi ra/m)|}{\sin(\pi a/m)}.
\end{equation}

First suppose $1\le r\le m/2$, so $d=r$. By the symmetry
$a\leftrightarrow m-a$,
\[
\Lambda_m(d)
\le
\frac12+
\frac2m\sum_{a=1}^{\lfloor m/2\rfloor}
\frac{|\sin(\pi da/m)|}{\sin(\pi a/m)}.
\]
For $1\le a\le m/2$,
\[
\sin\frac{\pi a}{m}\ge \frac{2a}{m}
\]
and
\[
\left|\sin\frac{\pi da}{m}\right|
\le
\min\left\{1,\frac{\pi da}{m}\right\}.
\]
Consequently, with
$
N:=\left\lfloor\frac m2\right\rfloor,$ $c:=\frac{\pi d}{m},$
we obtain
\begin{equation}\label{eq:upper-min} \Lambda_m(d) \le \frac12+ \sum_{a=1}^{N} \min\left\{c,\frac1a\right\}. 
\end{equation} 
Since 
$ N=\left\lfloor\frac{m}{2}\right\rfloor \ge \frac{m}{3}, $
we have 
\[ cN = \frac{\pi d}{m} \left\lfloor\frac{m}{2}\right\rfloor \ge \frac{\pi d}{3} \ge \frac{\pi}{3} >1.\] 
Consider the decreasing function \[ g(x):= \min\left\{c,\frac1x\right\}, \qquad x\ge1. \] 

We distinguish two cases. 

If \(0<c\le1\), then \(1/c\ge1\), and since \(cN\ge1\), we also have $ \frac1c\le N.$ Hence 
\[ g(x)= \begin{cases} c, & 1\le x\le 1/c,\\[1mm] 1/x, & 1/c\le x\le N. \end{cases} \]
 Since \(g\) is decreasing, the standard integral comparison gives 
 \[ \sum_{a=1}^{N}g(a) \le g(1)+\int_1^N g(x)\,dx. \] 
 Consequently, 
\begin{align*} \sum_{a=1}^{N} \min\left\{c,\frac1a\right\} &\le c+ \int_1^{1/c} c\,dx + \int_{1/c}^{N}\frac{dx}{x}\\ &= c+ c\left(\frac1c-1\right) + \log N-\log\frac1c\\ &= c+(1-c)+\log(cN)= 1+\log(cN). 
\end{align*} 
If \(c\ge1\), then 
$ \frac1a\le1\le c $ $ (a\ge1), $
and therefore \[ \min\left\{c,\frac1a\right\} = \frac1a. \] Hence 
\[ \sum_{a=1}^{N} \min\left\{c,\frac1a\right\} = \sum_{a=1}^{N}\frac1a. \] 
By the standard estimate for the harmonic sum, 
\[ \sum_{a=1}^{N}\frac1a
\le 1+\int_1^N\frac{dx}{x}= 1+\log N. \] 
Since \(c\ge1\), we have 
$ \log N\le\log(cN), $
and consequently 
\[ \sum_{a=1}^{N} \min\left\{c,\frac1a\right\} \le 1+\log(cN). \] 
Thus, in both cases, 
\begin{equation}\label{eq:min-sum-est} 
\sum_{a=1}^{N} \min\left\{c,\frac1a\right\} \le 1+\log(cN). 
\end{equation} 
Finally, since 
$ N=\left\lfloor\frac{m}{2}\right\rfloor \le \frac{m}{2}, $
we obtain 
$ cN = \frac{\pi d}{m}N \le \frac{\pi d}{2}. $
Therefore, \[ \log(cN) \le \log\frac{\pi d}{2}. \] 
Combining this with \eqref{eq:min-sum-est}, we obtain 
\[ \sum_{a=1}^{N} \min\left\{c,\frac1a\right\} \le 1+\log\frac{\pi d}{2}. \] 
Finally, by \eqref{eq:upper-min}, 
\begin{equation}\label{eq:lambda} 
\Lambda_m(d) \le \frac32+\log\frac{\pi d}{2}.
\end{equation}

For $d\ge1$,
\begin{equation}\label{eq:log}
\frac32+\log\frac{\pi d}{2}
\le
\frac72\log(1+d).
\end{equation} 
Indeed, let \(d\ge 1\), define
\[
F(d):=
\frac{7}{2}\log(1+d)
-\frac{3}{2}
-\log\frac{\pi d}{2}.
\]
Then inequality~\eqref{eq:log} holds for \(d=1\), since
$F(1)=\frac72\log2-\frac32-\log\frac{\pi}{2}\approx 0.47>0.
$
On the other hand, the derivative of the difference
between the right- and left-hand sides equals
\begin{equation*}
F^{'}(d)=
\frac{7}{2(1+d)}-\frac1d
=
\frac{5d-2}{2d(1+d)}
\end{equation*} 
and
$F'(d)>0\quad(d\ge1).$

Hence \(F\) is increasing on \([1,\infty)\). Since \(F(1)>0\),
inequality~\eqref{eq:log} follows.

Now let \(m/2<r\le m-1\) and put \(d:=m-r\). Then, for
\(a=1,\ldots,m-1\),
\[
\left|
\sin\left(\frac{\pi r a}{m}\right)
\right|
=
\left|
\sin\left(
\pi a-\frac{\pi d a}{m}
\right)
\right|
=
\left|
\sin\left(\frac{\pi d a}{m}\right)
\right|.
\]
Hence the nonzero-frequency terms in \eqref{eq:Lambda-sine}
are unchanged if \(r\) is replaced by \(d\) and  subtracting $\Lambda_m(r)$ and $\Lambda_m(d)$ gives
\begin{eqnarray*}
\Lambda_m(r)-\Lambda_m(d)=\frac{r-d}{m},
\end{eqnarray*}
Hence, applying \eqref{eq:lambda}, we obtain
\begin{eqnarray}\label{eq:Lambda-symmetry}
\Lambda_m(r)
&=&
\Lambda_m(d)+\frac{r-d}{m}
=
\Lambda_m(d)+1-\frac{2d}{m}\\ \notag
&\le& \Lambda_m(d)+1
\le
\frac52+\log\frac{\pi d}{2}
\le
\frac72\log(1+d).
\end{eqnarray}
Indeed, if 
\[
h(d):=
\frac{7}{2}\log(1+d)
-\frac{5}{2}
-\log\frac{\pi d}{2},
\]
then
\[
h(2)=\frac72\log3-\frac52-\log\pi\approx0.2004>0,
\qquad
h'(d)=F'(d)>0\quad(d\ge2).
\]
Hence \(h\) is increasing on \([2,\infty)\). Since \(h(2)>0\),
the last inequality in \eqref{eq:Lambda-symmetry} follows.

If $d=1$, then $r=m-1$ and directly
\[
\Lambda_m(1)
=
\frac{1}{m}
\sum_{a=0}^{m-1}
\left|e^{2\pi i a/m}\right|=\frac{1}{m}\sum_{a=0}^{m-1}
1=1.
\]
Therefore, by \eqref{eq:Lambda-symmetry}, we obtain
\[
\Lambda_m(m-1)=\Lambda_m(1)+1-\frac2m=2-\frac2m<2<\frac72\log2.
\]
This completes the proof.
\end{proof}

For the lower estimate we use the closely related quantity
\begin{equation*}
\mathcal L_m(q):=
\frac1m\sum_{a=0}^{m-1}
\left|
\sum_{u=0}^{q-1}\exp\!\left(\frac{2\pi\ii ua}{m}\right)
\right|,
\qquad 1\le q\le m.
\end{equation*}
If $q<m$, then $\mathcal L_m(q)=\Lambda_m(q)$, while
$\mathcal L_m(m)=1$.

\begin{lemma}\label{lem:calL-lower}
Let $m\ge2$ and $2\le q\le m$. Put
$
d:=\min\{q,m-q\}.
$
Then
\begin{equation}\label{eq:calL-lower}
\mathcal L_m(q)-1
\ge
\frac{1}{96\pi}\log(1+d).
\end{equation}
\end{lemma}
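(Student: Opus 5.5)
The plan is to reduce to $q\le m/2$, obtain a \emph{constant} margin above $1$ from a moment (Hölder) inequality, obtain a \emph{logarithmic} lower bound from the sine representation, and interpolate between the two.

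\emph{Reductions.} If $q=m$ then $d=0$ and $\mathcal L_m(m)-1=0$, so assume $2\le q\le m-1$. Then $\mathcal L_m(q)=\Lambda_m(q)$. Exactly as in the proof of Lemma~\ref{lem:Lambda-upper}, the nonzero-frequency part of \eqref{eq:Lambda-sine} is unchanged under $q\mapsto m-q$, so for $q>m/2$
\[
\Lambda_m(q)=\Lambda_m(d)+\frac{q-d}{m}\ge\Lambda_m(d).
\]
If $d=1$, then $q=m-1$ with $m\ge3$, and we have directly $\Lambda_m(m-1)-1=1-\frac2m\ge\frac13>\frac{\log2}{96\pi}$. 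Hence it suffices to prove \eqref{eq:calL-lower} for $2\le q=d\le m/2$.

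\emph{Constant margin.} Let $F(a):=\sum_{u=0}^{d-1}\e^{2\pi\ii ua/m}$, viewed as a function on $Z_m$ with normalized counting measure. Hölder's inequality gives $\|F\|_2^2\le\|F\|_1^{2/3}\|F\|_4^{4/3}$, so $\|F\|_1\ge\|F\|_2^3/\|F\|_4^2$. By \eqref{eq:r}, $\|F\|_2^2=d$. Likewise $\|F\|_4^4$ counts the solutions of $u_1+u_2\equiv u_3+u_4\pmod m$ with $0\le u_i<d$. Since $2(d-1)<m$, these congruences are genuine equalities, so the count is $\sum_s c_s^2=(2d^3+d)/3$. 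Therefore
\[
\mathcal L_m(d)\ge\Bigl(\frac{3d^2}{2d^2+1}\Bigr)^{1/2}\ge\Bigl(\frac{12}{9}\Bigr)^{1/2}\qquad(d\ge2),
\]
so $\mathcal L_m(d)-1\ge c_0:=2/\sqrt3-1>0.15$. This settles all $d$ below a fixed threshold $d_0$, since $\frac{1}{96\pi}\log(1+d_0)\le c_0$ whenever $d_0$ is moderate (e.g. $d_0\le e^{45}$).

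\emph{Logarithmic bound for large $d$.} From \eqref{eq:Lambda-sine}, the symmetry $a\leftrightarrow m-a$, and $\sin(\pi a/m)\le\pi a/m$,
\[
\Lambda_m(d)\ge\frac{2}{\pi}\sum_{m/d\le a\le m/2}\frac{|\sin(\pi da/m)|}{a}.
\]
Split $[m/d,m/2]$ into consecutive blocks of $\lceil m/d\rceil$ integers. Within one block the phases $da/m$ advance by steps $d/m\le\frac12$ and sweep an interval of length at least $1$. Consequently at least a fixed fraction (say $\ge\frac14$) of the $a$ in the block satisfy $\|da/m\|\ge\frac14$, and hence $|\sin(\pi da/m)|\ge\frac{1}{\sqrt2}$. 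Summing $1/a$ over these good $a$ block by block gives $\Lambda_m(d)\ge c_1\log d-C_1$ with explicit absolute constants (for example $c_1=\frac{1}{4\sqrt2\,\pi}$).

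\emph{Combination.} Finally, combine the two bounds as
\[
\mathcal L_m(d)-1\ge\max\{c_0,\;c_1\log d-C_1-1\},
\]
and check that this maximum is at least $\frac{1}{96\pi}\log(1+d)$ for every $d\ge2$. This is an elementary comparison with lots of room, since $\frac{1}{96\pi}$ is tiny.

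The main obstacle is the block-averaging step, because $m/d$ need not be an integer and the first and last blocks are incomplete. I would handle the incomplete blocks by discarding them, and use the crude equidistribution argument (consecutive phases move by at most $\frac12$, so the set $\{\|x\|\ge\frac14\}$ of measure $\frac12$ is hit at least once per half-turn) rather than any exact evaluation. The generous constant $\frac{1}{96\pi}$ suggests the authors proceed similarly.
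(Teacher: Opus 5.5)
Your argument is correct in outline, but it takes a genuinely different route from the paper.

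\textbf{Comparison with the paper.} The paper never bounds $\mathcal L_m(d)$ itself. It writes $\mathcal L_m(d)-1=\frac1m\sum_a\bigl(|P_d(a)|-\Re P_d(a)\bigr)$, with nonnegative summands. It must then control the phase factor $1-\sgn(\sin(dx_a))\cos((d-1)x_a)$ from \eqref{eq:Pdiff} twice: once for the uniform margin $c_*=\frac{\sqrt3}{4\pi}(1-\frac{\sqrt3}{2})\approx0.018$, which needs a split into $h\ge3$ and $2\le h<3$, and once for the logarithmic bound on the sets $A_r$, $r<d/12$.

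You obtain the constant margin from $\|F\|_1\ge\|F\|_2^3/\|F\|_4^2$ together with the additive-energy count $(2d^3+d)/3$. This is shorter and gives a much better margin, $2/\sqrt3-1\approx0.155$. You then prove only the crude bound $\Lambda_m(d)\ge c_1\log d-C_1$, with no phase analysis. The Hölder margin absorbs the loss from the subtracted $1$ on the whole range $\log(1+d)\le45$.

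The price is that your final step is a bookkeeping comparison. It works only because $\frac1{96\pi}$ is small relative to both $c_0$ and $c_1$. The paper's identity, by contrast, gives a bound for $\mathcal L_m(d)-1$ that is directly logarithmic at every scale $d\ge24$.

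\textbf{One step needs tightening.} Your fixed-fraction claim is true, but the reason you give does not prove it. ``Consecutive phases move by at most $\frac12$, so the good set is hit at least once per half-turn'' guarantees only $O(1)$ good $a$ per block of $\lceil h\rceil$ integers, where $h=m/d$. That yields a total of order $h^{-1}\log d$, which is not uniform in $m$. You need $\gtrsim h$ good $a$ per period.

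This follows at once if you drop the consecutive blocks and count directly, as the paper does with $A_r$. For $1\le k\le\lfloor d/2\rfloor-1$:
\begin{enumerate}[label=(\roman*)]
\item the interval $[(k+\frac14)h,(k+\frac34)h]$ lies in $[h,m/2)$;
\item it contains at least $\lfloor h/2\rfloor\ge h/4$ integers;
\item on it, $|\sin(\pi da/m)|=|\sin(\pi a/h)|\ge\frac1{\sqrt2}$ and $a\le(k+1)h$.
\end{enumerate}
Hence
\[
\Lambda_m(d)\ge\frac{2}{\sqrt2\,\pi}\sum_{k=1}^{\lfloor d/2\rfloor-1}\frac{1}{4(k+1)}\ge\frac{1}{2\sqrt2\,\pi}\log\frac{d+1}{4}.
\]
So $C_1$ is explicit and small. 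Moreover, $\frac{1}{2\sqrt2\pi}\log\frac{d+1}{4}-1\ge\frac1{96\pi}\log(1+d)$ as soon as $\log(1+d)\ge11$, which is well inside the range covered by your Hölder margin. With this fix your proof is complete.
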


\begin{proof}
If $d=0$, then $q=m$ and by \eqref{eq:r} we get that $\mathcal L_m(q)=1$, and both sides of
\eqref{eq:calL-lower} vanish. 


Suppose \(d=1\). Then
$d=\min\{q,m-q\}=1.$
There are two possibilities $q=1$ or $m-q=1.$
But \(q\ge2\), so \(q=1\) is impossible. Hence necessarily
$
m-q=1,
$
that is, $q=m-1.$

Now calculate \(\mathcal L_m(m-1)\). By definition,
\[
\mathcal L_m(m-1)
=
\frac1m
\sum_{a=0}^{m-1}
\left|
\sum_{u=0}^{m-2}e^{2\pi iua/m}
\right|.
\]

For \(a=0\),
\[
\sum_{u=0}^{m-2}e^{2\pi iu\cdot0/m}
=
\sum_{u=0}^{m-2}1
=
m-1.
\]
For \(a=1,\ldots,m-1\), we have \(a\not\equiv0\pmod m\). Hence, by
\eqref{eq:r}, we get that
$\sum_{u=0}^{m-1}e^{2\pi iua/m}=0.$
Therefore,
\[
\sum_{u=0}^{m-2}e^{2\pi iua/m}
=
-e^{2\pi i(m-1)a/m},
\]
and hence
$
\left|
\sum_{u=0}^{m-2}e^{2\pi iua/m}
\right|
=1.
$
There are \(m-1\) such nonzero values of \(a\). Consequently,
\[
\begin{aligned}
\mathcal L_m(m-1)=
\frac1m\left((m-1)+(m-1)\right)=
\frac{2(m-1)}{m}=
2-\frac2m.
\end{aligned}
\]

Thus,
\[\mathcal L_m(m-1)-1=1-\frac2m.\]

Since \(q=m-1\ge2\), we have
$
m\ge3.
$
Therefore,
$
1-\frac2m
\ge
1-\frac23
=
\frac13.
$
Finally,
\[\frac13>\frac{\log2}{96\pi},  \ \
\mathcal L_m(m-1)-1=1-\frac2m\ge\frac13>\frac{\log2}{96\pi}.\]

Because, by definition,
$
d=\min\{q,m-q\},
$
we always have
$
d\le \frac{m}{2}.$
Hence it remains to consider $2\le d\le m/2$.

We first reduce the proof to the case $q=d$. If $q>m/2$, then for every
$a\ne0$ the complete sum of all $m$-th roots of unity vanishes, so
\[
\left|\sum_{u=0}^{q-1}\e^{2\pi\ii ua/m}\right|
=
\left|\sum_{u=0}^{m-q-1}\e^{2\pi\ii ua/m}\right|.
\]

Indeed, for \(a\neq 0 \pmod m\), we have
$
\sum_{u=0}^{m-1} e^{2\pi iua/m}=0.
$
Hence
\[
\sum_{u=0}^{q-1}e^{2\pi iua/m}
=
-\sum_{u=q}^{m-1}e^{2\pi iua/m}.
\]
Putting \(u=q+v\), we obtain
\[
\sum_{u=q}^{m-1}e^{2\pi iua/m}
=
e^{2\pi iqa/m}
\sum_{v=0}^{m-q-1}e^{2\pi iva/m}.
\]
Since
$
\left|e^{2\pi iqa/m}\right|=1,
$
it follows that
\[
\left|
\sum_{u=0}^{q-1}e^{2\pi iua/m}
\right|
=
\left|
\sum_{v=0}^{m-q-1}e^{2\pi iva/m}
\right|.
\]
Thus, if \(q>m/2\), then \(d=m-q\) and
\[
\left|
\sum_{u=0}^{q-1}e^{2\pi iua/m}
\right|
=
\left|
\sum_{u=0}^{d-1}e^{2\pi iua/m}
\right|.
\]

Thus, all the terms with \(a\neq0\) in \(\mathcal L_m(q)\) and \(\mathcal L_m(d)\) are identical.
The only difference comes from \(a=0\). At \(a=0\),
\[
\left|
\sum_{u=0}^{q-1}1
\right|
=
q,
\qquad
\left|
\sum_{u=0}^{d-1}1
\right|
=
d.
\]
Therefore
\[
\mathcal L_m(q)-\mathcal L_m(d)
=
\frac{q-d}{m}.
\]

Now \(q=m-d\), hence
$
q-d=m-2d,
$
and therefore
\[
\mathcal L_m(q)-\mathcal L_m(d)
=
1-\frac{2d}{m}.
\]

Thus,
\[
\mathcal L_m(q)-1
=
\mathcal L_m(d)-1
+
1-\frac{2d}{m}.
\]

Finally, since \(d<m/2\), we have
$
1-\frac{2d}{m}\ge 0,
$
and consequently
\begin{equation}\label{eq:reduce-d}
\mathcal L_m(q)-1
=
\mathcal L_m(d)-1+1-\frac{2d}{m}
\ge
\mathcal L_m(d)-1.
\end{equation}
Thus, the case \(q>m/2\) can be reduced to the case
$
d=m-q<\frac{m}{2}.
$
Hence, after the above reduction, it is enough to consider
$
q=d,
$ $ 
2\le d\le \frac{m}{2}.
$

Thus we estimate $\mathcal L_m(d)-1$.
Put
\[
P_d(a):=\sum_{u=0}^{d-1}\e^{2\pi\ii ua/m}.
\]
Since
$
\frac1m\sum_{a=0}^{m-1}P_d(a)=1,
$
we have the identity
\begin{equation}\label{eq:Lminus1}
\mathcal L_m(d)-1
=
\frac1m\sum_{a=0}^{m-1}
\bigl(|P_d(a)|-\Re P_d(a)\bigr).
\end{equation}
Each summand on the right is nonnegative. Since
$
P_d(m-a)=\overline{P_d(a)},
$
we have
\[
|P_d(m-a)|-\Re P_d(m-a)
=
|P_d(a)|-\Re P_d(a).
\]
For \(1\le a\le m-1\), let
$x_a:=\frac{\pi a}{m}.$
Then
\[
P_d(a)=\sum_{u=0}^{d-1}e^{2\ii u x_a}
=
\frac{1-e^{2\ii d x_a}}{1-e^{2\ii x_a}}
=
\frac{-2\ii e^{\ii d x_a}\sin(dx_a)}
{-2\ii e^{\ii x_a}\sin x_a}
=
\e^{\ii(d-1)x_a}\frac{\sin(dx_a)}{\sin x_a},
\]
and therefore
\begin{equation}\label{eq:Pdiff}
|P_d(a)|-\Re P_d(a)
=
\frac{|\sin(dx_a)|}{\sin x_a}
\left[
1-\sgn(\sin(dx_a))\cos((d-1)x_a)
\right].
\end{equation}

We first establish a uniform positive lower bound. Let
\[
c_0:=1-\frac{\sqrt3}{2}
\qquad\text{and}\qquad
c_*:=\frac{\sqrt3\,c_0}{4\pi}.
\]
We claim that
\begin{equation}\label{eq:uniform-positive}
\mathcal L_m(d)-1\ge c_*.
\end{equation}
Set $h:=m/d\ge2$.

Assume first $h\ge3$. Consider the integers $a$ satisfying
\begin{equation}\label{eq:a-interval-uniform}
\frac{m}{3d}\le a\le\frac{2m}{3d}.
\end{equation}
The interval in \eqref{eq:a-interval-uniform} has length
$
\frac{2m}{3d}-\frac{m}{3d}
=
\frac{m}{3d}
=
\frac{h}{3}.
$
Since \(h\ge3\), we have \(h/3\ge1\). Any interval of length
\(L\ge1\) contains at least \(\lfloor L\rfloor\) integers, where \(\lfloor L\rfloor\) denotes the greatest integer not exceeding \(L\). Therefore,
the interval in \eqref{eq:a-interval-uniform} contains at least
$
\left\lfloor\frac{h}{3}\right\rfloor
\ge
\frac{h}{6},
$
integers, where in the last inequality we used
\(\lfloor x\rfloor\ge x/2\) for \(x\ge1\).

For such $a$,
\[
\frac{\pi}{3d}
\le
x_a
\le
\frac{2\pi}{3d}, \ \ \ \text{
and } \ \ \
\frac{\pi}{3}\le dx_a\le\frac{2\pi}{3},
\]
so $\sin(dx_a)\ge\sqrt3/2$. 
Moreover,

\[
\frac{\pi(d-1)}{3d}
\le
(d-1)x_a
\le
\frac{2\pi(d-1)}{3d}.
\]

Now, because \(d\ge2\),
$
\frac{d-1}{d}\ge \frac12,
$
and hence
$
\frac{\pi(d-1)}{3d}
\ge
\frac{\pi}{6}.
$
Also,
$
\frac{d-1}{d}<1,
$
so
$$
\frac{2\pi(d-1)}{3d}
<
\frac{2\pi}{3}.
$$
It follows that
$
\frac{\pi}{6}\le(d-1)x_a<\frac{2\pi}{3},
$
and hence
$
1-\cos((d-1)x_a)\ge c_0.
$
Finally,
$
\sin x_a\le x_a=\frac{\pi a}{m}\le\frac{2\pi}{3d}.
$
Thus \eqref{eq:Pdiff} yields
\begin{equation*}
|P_d(a)|-\Re P_d(a)
\ge
\frac{3\sqrt3\,c_0}{4\pi}\,d.
\end{equation*}

The terms corresponding to \(m-a\) have the same value, and the two sets of indices are disjoint. Hence, by \eqref{eq:Lminus1},

\[
\mathcal L_m(d)-1
\ge
\frac1m\cdot2\cdot\frac h6\cdot
\frac{3\sqrt3\,c_0}{4\pi}d
=
\frac{\sqrt3\,c_0}{4\pi}=c_*.
\]

Suppose now $2\le h<3$. Using only $a=1$ and $a=m-1$, we have
\[
\frac{\pi}{3}<\frac{\pi d}{m}\le\frac{\pi}{2},
\qquad
\frac{\pi}{6}<\frac{(d-1)\pi}{m}<\frac{\pi}{2}.
\]
Therefore
\[
\sin\frac{\pi d}{m}\ge\frac{\sqrt3}{2},
\qquad
1-\cos\frac{(d-1)\pi}{m}\ge c_0,
\qquad
\sin\frac{\pi}{m}\le\frac{\pi}{m}.
\]
The two terms $a=1,m-1$ in \eqref{eq:Lminus1} then give
\[
\mathcal L_m(d)-1
\ge
\frac{\sqrt3\,c_0}{\pi}>c_*.
\]
Thus \eqref{eq:uniform-positive} is proved.

It remains to derive the logarithmic lower bound. First suppose that \(2\le d\le23\). 
Since 
$ 96\pi c_* = 24\sqrt3\left(1-\frac{\sqrt3}{2}\right) > \log 24, $
by \eqref{eq:uniform-positive}
we obtain 
\[ \mathcal L_m(d)-1 \ge \frac{1}{96\pi}\log(1+d). \]
Suppose now that \(d\ge24\), and put 
$R:=\left\lfloor\frac{d}{12}\right\rfloor.$
Then \(R\ge2\). For \(0\le r\le R-1\), consider the set 
\[ A_r:= \left\{ a\in\mathbb N: \left(r+\frac14\right)h \le a\le \left(r+\frac34\right)h \right\}, \qquad h=\frac{m}{d}. \] 
Since in this part of the proof you have already reduced to
$
2\le d\le \frac m2,
$
we  get that the interval defining \(A_r\) has length \(h/2\ge1\), and hence, it contains at least 
$\left\lfloor\frac h2\right\rfloor\ge\frac h4 $
integers. For \(a\in A_r\), write 
\begin{equation}\label{eq:d-0}
\frac{a}{h}=r+s, \qquad \frac14\le s\le\frac34. 
\end{equation}
Since
$
a=h(r+s)=\frac{m}{d}(r+s),
$
we get
\begin{equation}\label{eq:d-1}
x_a=\frac{\pi a}{m}
=\frac{\pi(r+s)}{d}.
\end{equation}
Then $dx_a=\pi(r+s),$ and hence 
$ |\sin(dx_a)| = |\sin(\pi s)| \ge \frac{\sqrt2}{2}. $
Moreover,  \(\sin(\pi s)>0\),
$
\sin(dx_a)=sin(\pi(r+s))=(-1)^r\sin(\pi s),$
and therefore
$
\sgn(\sin(dx_a))=(-1)^r.
$
Moreover, 
$\sgn(\sin(dx_a))=(-1)^r. $
Using \eqref{eq:d-1} we get
\[ (d-1)x_a = \pi r+ \pi\left( s-\frac{r+s}{d} \right), \]
\[ \cos((d-1)x_a) = (-1)^r\cos\left( \pi\left( s-\frac{r+s}{d} \right) \right),\] 
and
\[ \sgn(\sin(dx_a))\cos((d-1)x_a) = \cos\left( \pi\left( s-\frac{r+s}{d} \right) \right). \] 
On the other hand, by using second estimate in \eqref{eq:d-0}, because \(r\le R-1\le d/12-1\), we get that
$$ s-\frac{r+s}{d}= s\left(1-\frac1d\right)-\frac rd\ge \frac14\left(1-\frac1d\right) -\left(\frac1{12}-\frac1d\right)= \frac16+\frac{3}{4d} > \frac16, $$ 
while clearly 
$ s-\frac{r+s}{d}\le s\le\frac34. $
Therefore, 
\[ 1-\sgn(\sin(dx_a))\cos((d-1)x_a) \ge 1-\cos\frac{\pi}{6} = c_0. \] 
Furthermore, 
\[ \sin x_a\le x_a = \frac{\pi(r+s)}{d} \le \frac{\pi(r+1)}{d}. \] 

Thus, by \eqref{eq:Pdiff}, \[ |P_d(a)|-\Re P_d(a) \ge \frac{\sqrt2\,c_0}{2\pi}\frac{d}{r+1}. \] 
By the above symmetry, the terms corresponding to \(m-a\) have the same
value. The intervals defining the sets \(A_r\) are mutually disjoint.
Moreover, if \(a\in A_r\), then
\[
\frac{a}{h}
\le r+\frac34
\le R-\frac14
<\frac{d}{12}+\frac34
<\frac d2,
\qquad d\ge24.
\]
Hence \(a<m/2\). Therefore, all sets \(A_r\) lie below \(m/2\),
whereas their reflected sets \(m-A_r\) lie above \(m/2\).
Consequently, the sets \(A_r\) and \(m-A_r\),
\(0\le r\le R-1\), are pairwise disjoint. Therefore, using
\eqref{eq:Lminus1},
\[ \begin{aligned} \mathcal L_m(d)-1 \ge \frac1m \sum_{r=0}^{R-1} 2\,\frac h4\, \frac{\sqrt2\,c_0}{2\pi} \frac{d}{r+1} = \frac{\sqrt2\,c_0}{4\pi} \sum_{r=0}^{R-1}\frac1{r+1} \ge \frac{\sqrt2\,c_0}{4\pi}\log(R+1).
\end{aligned}\]

Since 
$R=\lfloor d/12\rfloor\ge2,$ $d+1\le12(R+1)\le(R+1)^4,$
and consequently 
$ \log(R+1)\ge\frac14\log(d+1).$
It follows that 
$ \mathcal L_m(d)-1 \ge \frac{\sqrt2\,c_0}{16\pi}\log(d+1).$
Finally, $6\sqrt2\,c_0 = 6\sqrt2\left(1-\frac{\sqrt3}{2}\right)>1,$ and therefore \[ \mathcal L_m(d)-1 \ge \frac{1}{96\pi}\log(1+d). \] Together with \eqref{eq:reduce-d}, this proves \eqref{eq:calL-lower}.
The proof is complete.
\end{proof}

\section{Digit recursion and proof of the main theorem}

We begin with an exact recursion for the Lebesgue constants. For
$n=\sum_j n_jM_j$, set
\begin{equation*}
N_j:=\sum_{\ell=0}^{j}n_\ell M_\ell,
\qquad
N_{-1}:=0, \ \ \ \text{
and } \ \ \ 
t_j:=\frac{N_{j-1}}{M_j}.
\end{equation*}
Clearly $0\le t_j<1$.

For $m\ge2$, $1\le p<m$, and $0\le t<1$, define
\begin{equation*}
F_m(p,t):=
\frac1m\sum_{a=0}^{m-1}
\left|
\sum_{s=0}^{p-1}\e^{2\pi\ii sa/m}
+t\e^{2\pi\ii pa/m}
\right|-t.
\end{equation*}

\begin{proposition}\label{prop:recursion}
For every $n\in\N$,
\begin{equation*}
L_n=\sum_{\substack{j\ge0\\ n_j\ne0}}
F_{m_j}(n_j,t_j).
\end{equation*}
Moreover, for $1\le p<m$ and $0\le t<1$,
\begin{equation}\label{eq:F-first-lower}
F_m(p,t)\ge1-t
\end{equation}
and
\begin{equation}\label{eq:F-second-lower}
F_m(p,t)\ge \mathcal L_m(p+1)-1.
\end{equation}
\end{proposition}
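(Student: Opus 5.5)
Proof plan. The recursion will come from telescoping $L_{N_j}-L_{N_{j-1}}$ over the nonzero digits, since $N_{|n|}=n$ and $L_0=0$. The per-digit bounds \eqref{eq:F-first-lower} and \eqref{eq:F-second-lower} will come from the triangle inequality and from integrating out the Rademacher character $r_p$.

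\emph{Step 1: one-digit formula.} Fix $j$ with $n_j=p\ne0$ and put $b:=N_{j-1}<M_j$, so that $N_j=pM_j+b$. By \eqref{eq:blockD},
\[
D_{N_j}=D_{M_j}\sum_{s=0}^{p-1}r_j^s+r_j^pD_b .
\]
Split $G_m$ into $I_j$ and its complement.

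\begin{itemize}[leftmargin=*]
\item Off $I_j$, $D_{M_j}=0$ by \eqref{eq:DM}, so $|D_{N_j}|=|D_b|$ there.
\item On $I_j$, $D_b$ is constant and equal to $b$, because every $\psi_k$ with $k<M_j$ equals $1$ on $I_j$. Also $D_{M_j}=M_j$ there, hence
\[
D_{N_j}=M_j\Bigl(\sum_{s=0}^{p-1}r_j^s+t_jr_j^p\Bigr)\quad\text{on } I_j .
\]
\end{itemize}

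On $I_j$ the coordinate $x_j$ is uniformly distributed over $Z_{m_j}$, and $I_j$ has measure $1/M_j$. Therefore
\[
\int_{I_j}|D_{N_j}|\,d\mu=\frac1{m_j}\sum_{a}\Bigl|\sum_{s<p}e^{2\pi\ii sa/m_j}+t_je^{2\pi\ii pa/m_j}\Bigr|
=F_{m_j}(p,t_j)+t_j .
\]
Since $D_b=b$ on $I_j$, we have $\int_{I_j}|D_b|\,d\mu=b/M_j=t_j$. Consequently
\[
L_{N_j}=L_b-t_j+F_{m_j}(p,t_j)+t_j=L_{N_{j-1}}+F_{m_j}(n_j,t_j).
\]
When $n_j=0$ we have $N_j=N_{j-1}$, so nothing changes. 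Summing over $j$ gives the formula for $L_n$.

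\emph{Step 2: the bound \eqref{eq:F-first-lower}.} Apply the triangle inequality
\[
\Bigl|\sum_{s<p}e^{2\pi\ii sa/m}+te^{2\pi\ii pa/m}\Bigr|\ge\Bigl|\sum_{s<p}e^{2\pi\ii sa/m}\Bigr|-t .
\]
Averaging over $a$ gives $F_m(p,t)\ge\mathcal L_m(p)-2t$, which is too weak. Instead, use the cheaper bound $|z|\ge\Re z$ and average. By \eqref{eq:r}, the average of $\sum_{s<p}e^{2\pi\ii sa/m}$ over $a$ is $1$, and the average of $te^{2\pi\ii pa/m}$ is $0$ because $1\le p<m$. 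Hence $F_m(p,t)+t\ge1$, which is exactly \eqref{eq:F-first-lower}.

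\emph{Step 3: the bound \eqref{eq:F-second-lower}; this is the main point.} Write $z=e^{2\pi\ii a/m}$ and set
\[
\Phi(t):=\frac1m\sum_a\Bigl|\sum_{s<p}z^s+tz^p\Bigr| .
\]
This is a convex function of $t$. At $t=1$ it equals $\mathcal L_m(p+1)$, and at $t=0$ it equals $\mathcal L_m(p)$. The claim $\Phi(t)-t\ge\Phi(1)-1$ is equivalent to the statement that $\Phi(t)-t$ is nonincreasing on $[0,1]$. By convexity, it suffices to show that the left derivative at $t=1$ satisfies $\Phi'(1^-)\le1$. Indeed, convexity then gives $\Phi'(\tau)\le\Phi'(1^-)\le1$ for all $\tau<1$.

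The derivative at $t=1$ is bounded termwise by $|z^p|=1$, which only gives $\Phi'\le1$ in the averaged sense. Concretely,
\[
\frac{d}{dt}\Bigl|\sum_{s<p}z^s+tz^p\Bigr|\le|z^p|=1
\]
for each $a$, so $\Phi'(\tau)\le1$ everywhere. This holds even without convexity, since $\Phi$ is $1$-Lipschitz by the triangle inequality. Therefore
\[
\Phi(t)\ge\Phi(1)-(1-t),
\]
that is, $F_m(p,t)=\Phi(t)-t\ge\Phi(1)-1=\mathcal L_m(p+1)-1$.

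The only delicate point is noting that $p+1\le m$, so that $\mathcal L_m(p+1)$ is defined. The potential obstacle in the whole argument is Step 1, specifically verifying that $D_b\equiv b$ on $I_j$ and that Haar measure factorizes so that $x_j$ is uniform on $I_j$.
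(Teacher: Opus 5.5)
Your proposal is correct and follows the paper's proof essentially step for step. You use the same split of $G_m$ into $I_j$ and its complement with telescoping over $N_j$, and the same averaging argument for \eqref{eq:F-first-lower}: your $|z|\ge\Re z$ is the paper's triangle inequality applied to $\frac1m\sum_a Z_a=1$. For \eqref{eq:F-second-lower}, your $1$-Lipschitz bound $|A+tz^p|\ge|A+z^p|-(1-t)$ with $A=\sum_{s<p}z^s$ is exactly the paper's reverse triangle inequality. The convexity and derivative discussion in Step~3 is unnecessary, and your claimed equivalence with monotonicity of $\Phi(t)-t$ is only a one-way implication, so you can delete it and keep just the Lipschitz line.
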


\begin{proof}


For \(p=n_j\ne0\) we have
$
N_j=pM_j+N_{j-1},
$
$
t_j=\frac{N_{j-1}}{M_j}.
$
By using \eqref{eq:blockD} we get that
\[
D_{N_j}
=
D_{M_j}\sum_{s=0}^{p-1}r_j^s
+
r_j^pD_{N_{j-1}}.
\]

Now, we distinguish the regions \(G_m\setminus I_j\) and \(I_j\).

If \(x\notin I_j\), then
$
D_{M_j}(x)=0,
$
and hence
$
D_{N_j}(x)
=
r_j^p(x)D_{N_{j-1}}(x).
$
Since
$
|r_j^p(x)|=1,
$
we obtain
$
|D_{N_j}(x)|
=
|D_{N_{j-1}}(x)|.
$
Therefore,
\[
L_{N_j}-L_{N_{j-1}}
=
\int_{I_j}
\left(
|D_{N_j}(x)|
-
|D_{N_{j-1}}(x)|
\right)
\,d\mu(x).
\]

Now let \(x\in I_j\). Since \(N_{j-1}<M_j\), we have
$\psi_k(x)=1,$ $0\le k<N_{j-1},$
and consequently
$
D_{N_{j-1}}(x)
=
N_{j-1}.
$
Moreover,
$
D_{M_j}(x)=M_j.
$
Thus
\[
D_{N_j}(x)
=
M_j\sum_{s=0}^{p-1}r_j^s(x)
+
N_{j-1}r_j^p(x).
\]
Since
$
N_{j-1}=M_jt_j,
$
we get
\[
D_{N_j}(x)
=
M_j
\left(
\sum_{s=0}^{p-1}r_j^s(x)
+
t_jr_j^p(x)
\right).
\]

Next, partition \(I_j\) according to the value of the \(j\)-th coordinate:
\[
I_j
=
\bigcup_{a=0}^{m_j-1}
I_{j+1}(ae_j).
\]
For
$x\in I_{j+1}(ae_j),$
we have \(x_j=a\), and therefore
$
r_j(x)
=
e^{2\pi ia/m_j}.
$
Hence
\[
D_{N_j}(x)
=
M_j
\left(
\sum_{s=0}^{p-1}
e^{2\pi isa/m_j}
+
t_j e^{2\pi ipa/m_j}
\right),
\]
and consequently
\[
|D_{N_j}(x)|
=
M_j
\left|
\sum_{s=0}^{p-1}
e^{2\pi isa/m_j}
+
t_j e^{2\pi ipa/m_j}
\right|.
\]
On the other hand,
$
|D_{N_{j-1}}(x)|
=
N_{j-1}
=
M_jt_j.
$
Since
$
\mu\bigl(I_{j+1}(ae_j)\bigr)
=
\frac{1}{M_{j+1}}
=
\frac{1}{m_jM_j},
$
we obtain
\[
\begin{aligned}
L_{N_j}-L_{N_{j-1}}
&=
\sum_{a=0}^{m_j-1}
\frac{1}{m_jM_j}
\left[
M_j
\left|
\sum_{s=0}^{p-1}
e^{2\pi isa/m_j}
+
t_j e^{2\pi ipa/m_j}
\right|
-
M_jt_j
\right]
\\
&=
\frac1{m_j}
\sum_{a=0}^{m_j-1}
\left|
\sum_{s=0}^{p-1}
e^{2\pi isa/m_j}
+
t_j e^{2\pi ipa/m_j}
\right|
-t_j.
\end{aligned}
\]
By the definition of \(F_m(p,t)\), it follows that
$L_{N_j}-L_{N_{j-1}}=F_{m_j}(p,t_j).$

If \(J=|n|\), then \(n_j=0\) for all \(j>J\), and
$N_J=n.$
If \(n_j=0\), then
$
N_j=N_{j-1},
$
and consequently
$
L_{N_j}-L_{N_{j-1}}=0.
$
Therefore, summing from \(j=0\) to \(J\), we obtain
\[
\sum_{\substack{0\le j\le J\\ n_j\ne0}}
F_{m_j}(n_j,t_j)=\sum_{j=0}^{J}
\left(L_{N_j}-L_{N_{j-1}}\right)=L_{N_J}-L_{N_{-1}}=L_{N_J}=L_n.
\]
Hence
$	L_n=\sum_{\substack{j\ge0\\ n_j\ne0}}F_{m_j}(n_j,t_j).$

To prove \eqref{eq:F-first-lower}, set
$
Z_a:=
\sum_{s=0}^{p-1}e^{2\pi i s a/m}
+t e^{2\pi i p a/m},$ $ a=0,\ldots,m-1.$
Therefore
$$ \frac1m\sum_{a=0}^{m-1} Z_a = \frac1m\sum_{a=0}^{m-1} \left( \sum_{s=0}^{p-1}e^{2\pi i s a/m} +t\,e^{2\pi i p a/m} \right). $$

Interchanging the sums gives
$$ \frac1m\sum_{a=0}^{m-1} Z_a = \sum_{s=0}^{p-1} \frac1m\sum_{a=0}^{m-1}e^{2\pi i s a/m} + t\frac1m\sum_{a=0}^{m-1}e^{2\pi i p a/m}. $$

Since \(1\le p<m\), all indices $ s=1,\ldots,p-1 $
and also \(p\) lie between \(1\) and \(m-1\). According to \eqref{eq:r}, all corresponding averages are zero, the only nonzero term is the one with \(s=0\):
$$ \frac1m\sum_{a=0}^{m-1}e^{2\pi i\cdot 0\cdot a/m} = \frac1m\sum_{a=0}^{m-1}1 = 1. $$
Therefore,
$\frac1m\sum_{a=0}^{m-1} Z_a=1. $
By the triangle inequality,
\[
1
=
\left|
\frac1m\sum_{a=0}^{m-1} Z_a
\right|
\le
\frac1m\sum_{a=0}^{m-1}|Z_a|.
\]
Hence,
\[
F_m(p,t)
=
\frac1m\sum_{a=0}^{m-1}|Z_a|-t
\ge
1-t,
\]
which proves \eqref{eq:F-first-lower}. 

Now, we can write
$$ \sum_{s=0}^{p-1}e^{2\pi isa/m} +t e^{2\pi ipa/m} = \sum_{s=0}^{p}e^{2\pi isa/m} - (1-t)e^{2\pi ipa/m}. $$

By the reverse triangle inequality, $ |A-B|\ge |A|-|B|, $
so
$$ \left| \sum_{s=0}^{p-1}e^{2\pi isa/m} +t e^{2\pi ipa/m} \right| \ge \left| \sum_{s=0}^{p}e^{2\pi isa/m} \right| -(1-t). $$

Averaging over \(a=0,\ldots,m-1\), we obtain
$$ \frac1m\sum_{a=0}^{m-1} \left| \sum_{s=0}^{p-1}e^{2\pi isa/m} +t e^{2\pi ipa/m} \right| \ge \frac1m\sum_{a=0}^{m-1} \left| \sum_{s=0}^{p}e^{2\pi isa/m} \right| -(1-t). $$

By the definition of \(\mathcal L_m\),
$$ \frac1m\sum_{a=0}^{m-1} \left| \sum_{s=0}^{p}e^{2\pi isa/m} \right| = \mathcal L_m(p+1). $$

Hence
$$ \frac1m\sum_{a=0}^{m-1} \left| \sum_{s=0}^{p-1}e^{2\pi isa/m} +t e^{2\pi ipa/m} \right| \ge \mathcal L_m(p+1)-(1-t). $$

Subtracting \(t\) from both sides, we obtain
$$
F_m(p,t)
\ge
\mathcal L_m(p+1)-(1-t)-t.
$$
Since
$
(1-t)+t=1,
$
it follows that
$$
F_m(p,t)\ge \mathcal L_m(p+1)-1,
$$
which proves \eqref{eq:F-second-lower} and completes the proof.
\end{proof}


\begin{proposition}\label{prop:F-log} 
Let \(1\le p<m\), \(0\le t<1\), and put $d:=\min\{m-p-1,p+1\}.$ Then 
\begin{equation}\label{eq:F-log} 
F_m(p,t) \ge \frac{1}{192\pi}\log(1+d). 
\end{equation} 
If, in addition, \(t\le1/2\), then 
\begin{equation}\label{eq:F-block} 
F_m(p,t) \ge \frac{1}{192\pi} \bigl(1+\log(1+d)\bigr). 
\end{equation} 
\end{proposition}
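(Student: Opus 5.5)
The plan is to combine the two lower bounds of Proposition~\ref{prop:recursion} with Lemma~\ref{lem:calL-lower}. Put $q:=p+1$, so $2\le q\le m$, and note that $\min\{q,m-q\}=\min\{p+1,m-p-1\}=d$. Lemma~\ref{lem:calL-lower} applied to $q$, together with \eqref{eq:F-second-lower}, gives
\[
F_m(p,t)\ge \mathcal L_m(p+1)-1\ge \frac1{96\pi}\log(1+d).
\]
This already implies \eqref{eq:F-log}, and with room to spare, since $\frac1{96\pi}\ge\frac1{192\pi}$. The bound needs no hypothesis on $t$.

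For \eqref{eq:F-block}, assume $t\le 1/2$. Then \eqref{eq:F-first-lower} gives $F_m(p,t)\ge 1-t\ge\frac12$. I will average the two lower bounds, using $\max\{A,B\}\ge\frac12(A+B)$:
\[
F_m(p,t)\ge \frac12\cdot\frac12+\frac12\cdot\frac1{96\pi}\log(1+d)
=\frac14+\frac1{192\pi}\log(1+d).
\]
Since $\frac14\ge\frac1{192\pi}$, the right-hand side is at least $\frac1{192\pi}\bigl(1+\log(1+d)\bigr)$, which is \eqref{eq:F-block}.

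The only step that needs care is checking that Lemma~\ref{lem:calL-lower} applies. We need $q=p+1\ge2$, which holds because $p\ge1$. We need $q\le m$, which holds because $p<m$. The boundary case $q=m$, that is $p=m-1$, gives $d=0$, and there both sides of the lemma vanish. In that case \eqref{eq:F-log} is trivial, and \eqref{eq:F-block} follows from $1-t\ge\frac12>\frac1{192\pi}$. No other obstacle is expected: the constant $192\pi$ deliberately loses a factor of two against the lemma, and that factor is exactly what the averaging step uses.
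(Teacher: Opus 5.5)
Your proposal is correct and is essentially the paper's proof: Lemma~\ref{lem:calL-lower} with $q=p+1$, combined with \eqref{eq:F-second-lower}, gives the $\frac{1}{96\pi}\log(1+d)$ bound. For $t\le\frac12$, averaging that bound with $F_m(p,t)\ge 1-t\ge\frac12$ gives $\frac14+\frac{1}{192\pi}\log(1+d)$, which dominates the claim. Your separate treatment of $p=m-1$ is harmless but unnecessary, since the general argument already covers $d=0$.
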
 
\begin{proof} By \eqref{eq:F-second-lower}, we have 
\[ F_m(p,t) \ge \mathcal L_m(p+1)-1. \] 
Applying Lemma~\ref{lem:calL-lower} with $ q=p+1, $ we obtain 
\[ \mathcal L_m(p+1)-1 \ge \frac{1}{96\pi} \log \left( 1+ \min\{p+1,m-p-1\} \right). \] 
Since $d=\min\{m-p-1,p+1\},$
it follows that 
\begin{equation}\label{eq:F}
F_m(p,t) \ge \frac{1}{96\pi}\log(1+d), 
\end{equation} 
so,
\[ F_m(p,t) \ge \frac{1}{192\pi}\log(1+d), \] 
which proves \eqref{eq:F-log}. 

Suppose now, in addition, that \(t\le1/2\). By \eqref{eq:F-first-lower}, \[ F_m(p,t) \ge 1-t \ge \frac12. \] 
If we combine this estimate with \eqref{eq:F} we get 
\[ F_m(p,t) \ge \max \left\{ \frac12, \frac{1}{96\pi}\log(1+d) \right\}. \] 
Using the elementary inequality \[ \max\{A,B\}\ge\frac{A+B}{2}, \qquad A,B\ge0, \] 
we obtain 
\[ \begin{aligned} F_m(p,t) \ge \frac12 \left( \frac12+ \frac{1}{96\pi}\log(1+d) \right)= \frac14+ \frac{1}{192\pi}\log(1+d). \end{aligned} \] 
Since $ \frac14 \ge \frac{1}{192\pi}, $ we conclude that 
\[ \frac14+ \frac{1}{192\pi}\log(1+d) \ge \frac{1}{192\pi} \bigl(1+\log(1+d)\bigr). \] 
Hence, 
\[ F_m(p,t) \ge \frac{1}{192\pi} \bigl(1+\log(1+d)\bigr), \] 
which proves \eqref{eq:F-block}. 
\end{proof}


\begin{proof}[Proof of Theorem~\ref{thm:main}]
The case $n=0$ is immediate. Assume $n\ge1$, and let
\[
[\ell_1,\alpha_1],\ldots,[\ell_s,\alpha_s]
\]
be the maximal blocks of consecutive nonzero digits of $n$. Then it is easy to check that
\begin{equation}\label{eq:vblocks}
v(n)=2s.
\end{equation}

\medskip
\noindent\textit{Lower estimate.}
If $j=\ell_\nu$ is the beginning of a block, then either $j=0$ and $t_j=0$,
or $j\ge1$ and $n_{j-1}=0$. In the latter case $N_{j-1}<M_{j-1}$, and hence
\[
t_j=\frac{N_{j-1}}{M_j}<\frac1{m_{j-1}}\le\frac12.
\]
Using Propositions~\ref{prop:recursion} and~\ref{prop:F-log}, we obtain
\[
L_n
\ge
\frac{1}{192\pi}
\bigg(
s+
\sum_{\substack{j\ge0\\ n_j\ne0}}
\log(1+d_j)
\bigg).
\]
By using \eqref{eq:vblocks}
we have
$
s=\frac{v(n)}{2}.
$
Therefore,
\[
\frac{1}{192\pi}
\bigg(
\frac{v(n)}{2}
+
\sum_{\substack{j\ge0\\ n_j\ne0}}
\log(1+d_j)
\bigg)
\ge
\frac{1}{384\pi}
V_{\log}(n).
\]

\medskip
\noindent\textit{Upper estimate.}
Dirichlet-kernel identity \eqref{eq:blockD1}
allows us to write 
\[
D_n=\psi_n
\sum_{\substack{j\ge0\\ n_j\ne0}}
D_{M_j}
\sum_{u=1}^{m_j-1}r_j^u-\psi_n
\sum_{\substack{j\ge0\\ n_j\ne0}}
D_{M_j}
\sum_{u=1}^{m_j-n_j-1}r_j^u=I-II.
\]

As in the blockwise telescoping argument of \cite{BPT}, we have 
\begin{equation}\label{eq:I-upper} 
\|I\|_1\le v(n). 
\end{equation} 
The telescoping identity used in this argument is purely algebraic and does not require the boundedness condition \(\sup_jm_j<\infty\).

For every \(j\) such that \(n_j\ne0\), put $ \rho_j:=m_j-n_j-1. $ Then 
\[ \left\| D_{M_j}\sum_{u=1}^{\rho_j}r_j^u \right\|_1 = \Lambda_{m_j}(\rho_j). \]
Moreover, 
$$ \min\{\rho_j,m_j-\rho_j\} = \min\{m_j-n_j-1,n_j+1\} = d_j. $$
Hence, by Lemma~\ref{lem:Lambda-upper}, 
\[ \Lambda_{m_j}(\rho_j) \le \frac{7}{2}\log(1+d_j). \] 
Consequently, 
\[ \begin{aligned} 
\|II\|_1 \le \sum_{\substack{j\ge0\\ n_j\ne0}} \Lambda_{m_j}(\rho_j)\le \frac{7}{2} \sum_{\substack{j\ge0\\ n_j\ne0}} \log(1+d_j). 
\end{aligned} \] 
Combining this estimate with \eqref{eq:I-upper}, we obtain 
\[ \begin{aligned} 
L_n =\|D_n\|_1 &\le \|I\|_1+\|II\|_1\le v(n) + \frac{7}{2} \sum_{\substack{j\ge0\\ n_j\ne0}} \log(1+d_j)\\ &\le \frac{7}{2} \bigg( v(n) + \sum_{\substack{j\ge0\\ n_j\ne0}} \log(1+d_j) \bigg)= \frac{7}{2}V_{\log}(n). 
\end{aligned} \] 
Thus, 
$L_n\le\frac{7}{2}V_{\log}(n). $
This completes the proof of the upper estimate.
\end{proof}

\begin{corollary}\label{cor:log-growth} 
For every Vilenkin system and every integer \(n\ge2\), \[ L_n\le C\log(n+1), \] where \(C>0\) is an absolute constant independent of the generating sequence \(\{m_j\}\). Moreover, this logarithmic order is sharp. In particular, for the Walsh system, if $q_k:=2^{2k}+2^{2k-2}+\cdots+1,$ then \[ L_{q_k}\asymp \log q_k . \] 
\end{corollary}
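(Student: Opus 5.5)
The plan is to use the upper half of Theorem~\ref{thm:main}, $L_n\le\frac72\Vlog(n)$, and show $\Vlog(n)\le C\log(n+1)$ with an absolute $C$. The two pieces of $\Vlog(n)$ are handled separately.

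\emph{The variation term.} By \eqref{eq:vblocks} we have $v(n)=2s$, where $s$ is the number of maximal blocks of nonzero digits. Distinct blocks are separated by at least one zero digit. Let $J=|n|$. Then $s\le (J+2)/2$, so $v(n)\le J+2$. Since $m_j\ge2$ and $n_J\ge1$, we get $n\ge M_J\ge 2^J$, hence $J\le\log_2 n$. For $n\ge2$ this gives
\[
v(n)\le \log_2 n+2\le C\log(n+1).
\]

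\emph{The logarithmic term.} For each nonzero digit we have $d_j\le n_j+1$. So
\[
\sum_{n_j\ne0}\log(1+d_j)\le\sum_{n_j\ne0}\log(n_j+2)\le 2\sum_{n_j\ne0}\log(n_j+1).
\]
Next I compare with $\log n$. Since $n\ge n_JM_J$ and $M_J=\prod_{j<J}m_j\ge\prod_{j<J}(n_j+1)$ (because $n_j+1\le m_j$), we obtain
\[
n\ge n_J\prod_{j<J}(n_j+1).
\]
The top factor $n_J$ versus $n_J+1$ costs at most $\log 2$. Therefore $\sum_j\log(n_j+1)\le\log n+\log2$. Combining the two bounds gives $\Vlog(n)\le C\log(n+1)$ for $n\ge2$, with $C$ absolute.

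\emph{Sharpness.} Take the Walsh system, so $m_j=2$ and every nonzero digit equals $1$ with $d_j=\min\{0,2\}=0$. The log terms then vanish and $\Vlog(n)=v(n)$. For $q_k=\sum_{i=0}^k 2^{2i}$ the nonzero digits sit at positions $0,2,\dots,2k$. These form $k+1$ isolated blocks, so $v(q_k)=2(k+1)$. The lower half of Theorem~\ref{thm:main} gives
\[
L_{q_k}\ge \frac{2(k+1)}{384\pi}.
\]
Since $\log q_k\asymp k$, this shows $L_{q_k}\asymp\log q_k$ and that the logarithmic order cannot be improved.

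The only delicate step is the product comparison $n\ge n_J\prod_{j<J}(n_j+1)$. I expect it to go through directly from $n_j\le m_j-1$ and $n\ge n_J M_J$.
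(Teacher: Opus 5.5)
Your proof is correct and follows essentially the same route as the paper: you bound $v(n)$ through $J\le\log_2 n$, control the logarithmic term by comparing digit products with $n\ge n_JM_J$, apply the upper half of Theorem~\ref{thm:main}, and prove sharpness with the Walsh indices $q_k$ and the lower half of the theorem. The only difference is cosmetic: the paper bounds the lower-digit terms by $\log(1+d_j)\le\log m_j$ (via $d_j\le m_j-n_j-1$) and the top term by $\log(3n_J)$, whereas you use $d_j\le n_j+1$ throughout together with $\prod_{j<J}(n_j+1)\le M_J$; both give $\Vlog(n)\le C\log(n+1)$ with an absolute constant.
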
 
\begin{proof} Let \(J:=|n|\). Since \(M_J\le n\) and \(M_J\ge2^J\), we have $ J\le\log_2 n. $ Moreover, $ v(n)\le2(J+1). $ For \(j<J\), $ 1+d_j\le m_j,$ while for the highest nonzero digit, 
$ 1+d_J\le n_J+2\le3n_J. $
Consequently, 
$$ \sum_{\substack{j\ge0\\ n_j\ne0}} \log(1+d_j) \le \log M_J+\log(3n_J)\le \log(3n). $$
Hence $V_{\log}(n) \le 2\log_2 n+2+\log(3n) \le C_1\log(n+1)$ with an absolute constant \(C_1\). Therefore, by Theorem~\ref{thm:main}, $ L_n\le C\log(n+1).$ 

To see that this order cannot be improved in general, consider the Walsh system and 
$q_k=\sum_{j=0}^{k}2^{2j} =\frac{4^{k+1}-1}{3}.$ The nonzero binary digits of \(q_k\) are isolated, and therefore $ v(q_k)=2(k+1).$ Since \(m_j=2\), we have \(d_j=0\) at every nonzero digit, and hence $ V_{\log}(q_k)=2(k+1).$ 
By Theorem~\ref{thm:main}, $ L_{q_k}\asymp k+1.$ 
On the other hand, $\log q_k\asymp k+1,$ and consequently $L_{q_k}\asymp\log q_k.$ 
\end{proof}

\begin{remark}
Theorem~\ref{thm:main} does not require $\sup_jm_j<\infty$. Thus the
constants in the two-sided estimate are independent of the local bases
$m_j$. In particular, when some $m_j$ are large, the contribution of the
$j$th digit is logarithmic in the shorter cyclic length $d_j$, rather than
linear in $m_j-n_j-1$.
\end{remark}

\begin{remark}
We have not attempted to optimize the absolute constants in
Theorem~\ref{thm:main}. The point is the uniform equivalence
$L_n\asymp \Vlog(n)$ for arbitrary Vilenkin systems.
\end{remark}

\section{Norm convergence of subsequences in $L_1$}

We first derive a quantitative estimate in terms of the modulus of
continuity introduced in \eqref{def:omega-p}.

\begin{proposition}\label{prop:L1-modulus}
Let $f\in L_1(G_m)$ and let $n\ge1$. If
$
M_r\le n<M_{r+1},
$
then
\begin{equation}\label{eq:L1-modulus}
\|S_nf-f\|_1
\le
4\,\Vlog(n)\,
\omega_1\left(\frac1{M_r},f\right).
\end{equation}
\end{proposition}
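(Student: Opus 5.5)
The plan is the classical reduction to the martingale partial sum $S_{M_r}$. Since $M_r\le n$, we have $S_n(S_{M_r}f)=S_{M_r}f$. Hence
\[
S_nf-f=S_n\bigl(f-S_{M_r}f\bigr)+\bigl(S_{M_r}f-f\bigr).
\]
Young's inequality for convolution gives $\|S_ng\|_1\le L_n\|g\|_1$. Applying this with $g=f-S_{M_r}f$ and using the upper half of \eqref{eq:omega-SM},
\[
\|S_nf-f\|_1\le (L_n+1)\,\|f-S_{M_r}f\|_1\le (L_n+1)\,\omega_1\Bigl(\frac1{M_r},f\Bigr).
\]

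Next I bound $L_n+1$ by Theorem~\ref{thm:main}. The upper half gives $L_n\le \frac72\Vlog(n)$. Since $n\ge1$, there is at least one maximal block of nonzero digits, so \eqref{eq:vblocks} yields $v(n)\ge2$ and therefore $\Vlog(n)\ge2$. Consequently
\[
1\le \tfrac12\Vlog(n),
\qquad
L_n+1\le \tfrac72\Vlog(n)+\tfrac12\Vlog(n)=4\,\Vlog(n),
\]
which is exactly \eqref{eq:L1-modulus}.

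There is no real obstacle here; the step needing care is confirming the identity $S_nS_{M_r}=S_{M_r}$. This holds because $S_{M_r}f$ lies in the span of $\psi_0,\dots,\psi_{M_r-1}$ and $n\ge M_r$, so $S_n$ acts as the identity on that span. The bound $\|S_n\|_{L_1\to L_1}\le\|D_n\|_1$ holds because $S_nf=D_n\ast f$. Note that the hypothesis $n<M_{r+1}$ is not actually needed for the inequality. It only fixes which scale $r$ is used, namely the largest one for which $M_r\le n$, so that the resulting bound is the sharpest available.
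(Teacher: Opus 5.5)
Your proof is correct and follows the paper's argument essentially verbatim: the splitting $S_nf-f=S_n(f-S_{M_r}f)+(S_{M_r}f-f)$, Young's inequality giving the factor $L_n+1$, the upper half of \eqref{eq:omega-SM}, and finally Theorem~\ref{thm:main} together with $\Vlog(n)\ge v(n)\ge 2$ to absorb the $+1$. Your side remark is also accurate: the hypothesis $n<M_{r+1}$ only selects the scale $r$ and is not needed for the inequality itself.
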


\begin{proof}
Since $n\ge M_r$, we have
$$
S_nS_{M_r}f=S_{M_r}f.
$$
Consequently,
\[
S_nf-f
=
S_n\bigl(f-S_{M_r}f\bigr)
+S_{M_r}f-f.
\]
By Young's inequality and the definition of the Lebesgue constant,
\[
\|S_ng\|_1\le L_n\|g\|_1,
\qquad g\in L_1(G_m).
\]
Hence
\[
\|S_nf-f\|_1
\le
(L_n+1)\|f-S_{M_r}f\|_1.
\]
Using \eqref{eq:omega-SM} and Theorem~\ref{thm:main}, we obtain
\[
\|S_nf-f\|_1
\le
\left(\frac72\Vlog(n)+1\right)
\omega_1\left(\frac1{M_r},f\right).
\]
For $n\ge1$, the set of nonzero digits has at least one block, and hence
$v(n)\ge2$. Therefore 
$\Vlog(n)\ge2$, so
$$
\frac72\Vlog(n)+1\le4\Vlog(n).
$$
This proves \eqref{eq:L1-modulus}.
\end{proof}

The preceding estimate yields a characterization of norm convergence along
subsequences.

\begin{theorem}\label{thm:L1-subsequence}
Let $G_m$ be an arbitrary Vilenkin group, bounded or unbounded, and let
$\{\alpha_k:k\in\N\}$ be a strictly increasing sequence of positive
integers. Then the following conditions are equivalent:
\begin{enumerate}[label=\textup{(\roman*)}]
\item \label{item:Vlog-L1}
$\displaystyle \sup_{k\in\N}\Vlog(\alpha_k)<\infty$;
\item \label{item:uniform-L1}
$\displaystyle \sup_{k\in\N}
\|S_{\alpha_k}\|_{L_1\to L_1}<\infty$;
\item \label{item:conv-L1}
for every $f\in L_1(G_m)$,
\[
\|S_{\alpha_k}f-f\|_1\rightarrow0
\quad\text{as } \quad k\to\infty.
\]
\end{enumerate}
In particular, bounded logarithmic variation of the indices is sufficient
for $L_1$-norm convergence of the corresponding subsequence of partial sums.
\end{theorem}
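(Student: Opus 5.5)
We prove the cycle of implications (i)$\Rightarrow$(iii)$\Rightarrow$(ii)$\Rightarrow$(i), using Theorem~\ref{thm:main}, Proposition~\ref{prop:L1-modulus} and the Banach--Steinhaus theorem.

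\emph{(i)$\Rightarrow$(iii).} Let $K:=\sup_k\Vlog(\alpha_k)<\infty$. For each $k$ choose $r_k$ with $M_{r_k}\le\alpha_k<M_{r_k+1}$. Since $\alpha_k$ is strictly increasing, $\alpha_k\to\infty$ and hence $r_k\to\infty$. Proposition~\ref{prop:L1-modulus} gives
\[
\|S_{\alpha_k}f-f\|_1\le 4K\,\omega_1\left(\frac1{M_{r_k}},f\right),
\]
and the right-hand side tends to $0$ because $\omega_1(1/M_r,f)\to0$ by \eqref{eq:omega-SM}.

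\emph{(iii)$\Rightarrow$(ii).} If $S_{\alpha_k}f\to f$ in $L_1$ for every $f$, then for each fixed $f$ the sequence $\{\|S_{\alpha_k}f\|_1\}$ is bounded. The operators $S_{\alpha_k}$ are bounded on the Banach space $L_1(G_m)$, since $\|S_nf\|_1\le L_n\|f\|_1$ by Young's inequality. The uniform boundedness principle therefore yields $\sup_k\|S_{\alpha_k}\|_{L_1\to L_1}<\infty$.

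\emph{(ii)$\Rightarrow$(i).} This step needs the identification $\|S_n\|_{L_1\to L_1}=L_n$; Young's inequality only gives the upper bound. For the lower bound we test $S_n$ on $f=D_{M_N}$ with $N$ large, say $M_N>n$. We have $\|D_{M_N}\|_1=1$ by \eqref{eq:normDM}, and $\widehat{D_{M_N}}(k)=1$ for $k<M_N$. Hence
\[
S_nD_{M_N}=D_n\ast D_{M_N}=\sum_{k<n}\psi_k=D_n,
\]
so $\|S_n\|_{L_1\to L_1}\ge\|D_n\|_1=L_n$, and thus $\|S_n\|_{L_1\to L_1}=L_n$. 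Combining this with the lower half of Theorem~\ref{thm:main},
\[
\Vlog(\alpha_k)\le 384\pi\,L_{\alpha_k}=384\pi\,\|S_{\alpha_k}\|_{L_1\to L_1},
\]
and (i) follows from (ii).

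The argument has no real obstacle. The only points to check are the exact norm identity $\|S_n\|_{L_1\to L_1}=L_n$, which uses the $L_1$-normalized approximate identity $D_{M_N}$, and the fact that $r_k\to\infty$. The substantive input is the lower estimate in Theorem~\ref{thm:main}.
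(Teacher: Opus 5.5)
Your proposal is correct and follows the paper's proof essentially step for step. It uses the same cycle (i)$\Rightarrow$(iii)$\Rightarrow$(ii)$\Rightarrow$(i), with the same ingredients in each step: Proposition~\ref{prop:L1-modulus} with $r_k=|\alpha_k|\to\infty$; the uniform boundedness principle; and the identity $\|S_n\|_{L_1\to L_1}=L_n$, obtained by testing on $D_{M_N}$ with $M_N>n$, combined with the lower half of Theorem~\ref{thm:main}.
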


\begin{proof}
Assume first \ref{item:Vlog-L1}, and put
$$
K:=\sup_k\Vlog(\alpha_k)<\infty.
$$
Let $r_k:=|\alpha_k|$, so that
$
M_{r_k}\le \alpha_k<M_{r_k+1}.
$
Since $\alpha_k\to\infty$, we have $r_k\to\infty$. By
Proposition~\ref{prop:L1-modulus},
\[
\|S_{\alpha_k}f-f\|_1
\le
4K\,
\omega_1\left(\frac1{M_{r_k}},f\right).
\]
The modulus of continuity tends to zero as $r_k\to\infty$, and hence
\[
\|S_{\alpha_k}f-f\|_1\to0.
\]
Thus \ref{item:Vlog-L1} implies \ref{item:conv-L1}.

The implication \ref{item:conv-L1}$\Rightarrow$\ref{item:uniform-L1}
follows from the uniform boundedness principle, since for every
$f\in L_1(G_m)$ the sequence $\{S_{\alpha_k}f\}$ is norm convergent and is
therefore bounded in $L_1$.

It remains to prove
\ref{item:uniform-L1}$\Rightarrow$\ref{item:Vlog-L1}. We first observe that
for every $n$,
\begin{equation}\label{eq:operator-norm-Ln}
\|S_n\|_{L_1\to L_1}=L_n.
\end{equation}
Indeed, since $S_nf=f*D_n$, Young's inequality gives
\[
\|S_n\|_{L_1\to L_1}\le\|D_n\|_1=L_n.
\]
Conversely, choose $r$ so large that $M_r>n$. 
Since the first $n$ Fourier coefficients of $D_{M_r}$ are equal to one, we
have
$
S_nD_{M_r}=D_n.
$
Thus, by combining \eqref{eq:DM} and \eqref{eq:normDM} we get that
\[
\|S_n\|_{L_1\to L_1}
\ge
\frac{\|S_nD_{M_r}\|_1}{\|D_{M_r}\|_1}
=L_n,
\]
which proves \eqref{eq:operator-norm-Ln}.

If \ref{item:uniform-L1} holds, then by \eqref{eq:operator-norm-Ln} the
sequence $\{L_{\alpha_k}\}$ is bounded. The lower estimate in
Theorem~\ref{thm:main} gives
\[
\frac1{384\pi}\Vlog(\alpha_k)
\le L_{\alpha_k} \ \ \ \text{
and }\ \ \
\sup_k\Vlog(\alpha_k)<\infty.
\]
This proves \ref{item:Vlog-L1} and completes the proof.
\end{proof}

\begin{remark}
For bounded Vilenkin systems, norm-convergence criteria of this type were
previously expressed through the classical digit variations; compare
\cite{BPT}. Theorem~\ref{thm:L1-subsequence} shows that the logarithmic
variation is the natural quantity for arbitrary Vilenkin systems, since its
constants are independent of $\sup_jm_j$.
\end{remark}

\section{Almost everywhere convergence of subsequences}

We now prove Theorem~\ref{thm:ae}. The argument follows the maximal-operator
strategy used in \cite{APT}, but the decomposition is adapted to the
logarithmic variation $\Vlog$.
For $n=\sum_jn_jM_j$ and $n_j\ne0$, recall that
$
d_j=\min\{m_j-n_j-1,n_j+1\}.
$
Split the set of nonzero digits into
\[
\mathcal A(n):=
\{j:n_j\ne0,\ n_j+1\le m_j-n_j-1\}
\]
and
\[
\mathcal B(n):=
\{j:n_j\ne0,\ m_j-n_j-1<n_j+1\}.
\]
Thus, for $j\in\mathcal A(n)$, one has $d_j=n_j+1$, whereas for
$j\in\mathcal B(n)$,
\[
d_j=m_j-n_j-1.
\]

\begin{proposition}\label{prop:pointwise-domination}
For every $K>0$ there exists a constant $A_K>0$ with the following
property. If $n\ge1$ and $\Vlog(n)\le K$, then for every
$f\in L^1(G_m)$,
\begin{equation}\label{eq:pointwise-domination}
|S_nf(x)|\le A_K\,\mathcal Mf(x),
\qquad x\in G_m.
\end{equation}
The constant $A_K$ is independent of the generating sequence $m$.
\end{proposition}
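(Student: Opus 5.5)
Write $\Vlog(n)\le K$. Then $v(n)\le K$, so the nonzero digits form at most $K/2$ maximal blocks. Also $\log(1+d_j)\le K$ for every nonzero digit, so $d_j\le e^K$. The plan is to decompose $D_n$ via \eqref{eq:blockD1}, writing $\sum_{u=m_j-n_j}^{m_j-1}r_j^u$ in a form adapted to $\mathcal A(n)$ or $\mathcal B(n)$. Each resulting piece will be dominated pointwise, after convolution with $|f|$, by $C(K)\,\mathcal Mf$. Since $|\psi_n|=1$, it suffices to bound $\bigl|\bigl(D_{M_j}P_j\bigr)*f(x)\bigr|$, where $P_j$ is a trigonometric polynomial in $r_j$.

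\emph{Digits in $\mathcal B(n)$.} Write, as in the upper estimate of Theorem~\ref{thm:main},
\[
\sum_{u=m_j-n_j}^{m_j-1}r_j^u=\sum_{u=0}^{m_j-1}r_j^u-\sum_{u=0}^{\rho_j}r_j^u,\qquad \rho_j=m_j-n_j-1=d_j\le e^K .
\]
The full sum times $D_{M_j}$ equals $D_{M_{j+1}}$, and $|D_{M_{j+1}}*f|\le S_{M_{j+1}}|f|\le\mathcal Mf$. For the short sum, each term satisfies $|(D_{M_j}r_j^u)*f|\le D_{M_j}*|f|\le\mathcal Mf$. Hence the total contribution of digit $j$ is at most $(d_j+2)\mathcal Mf\le(e^K+2)\mathcal Mf$.

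\emph{Digits in $\mathcal A(n)$.} The sum $\sum_{u=m_j-n_j}^{m_j-1}r_j^u$ has $n_j=d_j-1\le e^K$ terms. Each term is again bounded by $\mathcal Mf$, giving at most $e^K\mathcal Mf$.

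\emph{Counting digits.} The number of nonzero digits is not controlled by $v(n)$ alone, since a block may be long. However, every nonzero digit has $d_j\ge2$ unless $m_j-n_j-1\le1$. In either case $\log(1+d_j)\ge\log 2$ whenever $d_j\ge1$. So the digits with $d_j\ge1$ number at most $K/\log 2$. The remaining digits have $d_j=0$, i.e.\ $n_j=m_j-1$. For these, the digit-$j$ term in \eqref{eq:blockD1} is $D_{M_j}\sum_{u=1}^{m_j-1}r_j^u=D_{M_{j+1}}-D_{M_j}$. Within a block these telescope: a run $j=a,\dots,b$ of such digits contributes $D_{M_{b+1}}-D_{M_a}$, which is bounded by $2\mathcal Mf$ after convolution with $|f|$.

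I expect this telescoping to be the main obstacle. It must be done carefully: the factor $\psi_n$ is common to all terms, so the telescoping really is algebraic, and the number of such runs must be bounded by the number of blocks plus the number of digits with $d_j\ge1$, i.e.\ by $K/2+K/\log2$. Summing all contributions then gives
\[
A_K=(e^K+2)\left(\frac K2+\frac{2K}{\log 2}+1\right),
\]
which is independent of $m$. This yields \eqref{eq:pointwise-domination}.
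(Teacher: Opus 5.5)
Your proposal is correct and follows essentially the same route as the paper. Both proofs split the nonzero digits via $\mathcal A(n)/\mathcal B(n)$ in \eqref{eq:blockD1}, dominate each short-sum term by $\mathcal Mf$, telescope full-digit sums over runs, and bound the number of runs by the number of blocks plus the (at most $K/\log 2$) digits with $d_j\ge1$. The only difference is cosmetic: you telescope only over runs of digits with $d_j=0$ and treat each digit with $d_j\ge1$ individually via $D_{M_j}\sum_{u=0}^{m_j-1}r_j^u=D_{M_{j+1}}$, while the paper telescopes over maximal runs of all of $\mathcal B(n)$; both give an $m$-independent constant $A_K$.
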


\begin{proof}
Starting from \eqref{eq:blockD1}, for $j\in\mathcal A(n)$ we keep the
short sum in its original form, while for $j\in\mathcal B(n)$ we use
\[
\sum_{u=m_j-n_j}^{m_j-1}r_j^u
=
\sum_{u=1}^{m_j-1}r_j^u
-
\sum_{u=1}^{m_j-n_j-1}r_j^u.
\]
Hence
\begin{equation}\label{eq:Dn-ABC}
D_n=\psi_n\bigl(\mathcal P_n+\mathcal Q_n-\mathcal R_n\bigr),
\end{equation}
where
\[
\mathcal P_n:=
\sum_{j\in\mathcal A(n)}D_{M_j}
\sum_{u=m_j-n_j}^{m_j-1}r_j^u,
\]
\[
\mathcal Q_n:=
\sum_{j\in\mathcal B(n)}D_{M_j}
\sum_{u=1}^{m_j-1}r_j^u,
\]
and
\[
\mathcal R_n:=
\sum_{j\in\mathcal B(n)}D_{M_j}
\sum_{u=1}^{m_j-n_j-1}r_j^u.
\]

Write $\mathcal B(n)$ as the union of its maximal consecutive intervals
\[
[a_1,b_1],\ldots,[a_{R_\mathcal B},b_{R_\mathcal B}].
\]
For each such interval, the same telescoping identity used in the proof of
the upper estimate in \cite{BPT,APT} gives
\begin{equation}\label{eq:full-block-telescope}
\sum_{j=a_\nu}^{b_\nu}D_{M_j}
\sum_{u=1}^{m_j-1}r_j^u
=
\frac{D_{M_{b_\nu+1}}}{\psi_{M_{b_\nu+1}-1}}
-
\frac{D_{M_{a_\nu}}}{\psi_{M_{a_\nu}-1}}.
\end{equation}
Consequently, after convolution with $f$ and taking absolute values, every
interval $[a_\nu,b_\nu]$ contributes at most two copies of
$\mathcal Mf(x)$.

The remaining terms in \eqref{eq:Dn-ABC} are short sums. Their total
number of individual Rademacher terms is at most
\begin{equation}\label{eq:short-count}
Q(n):=
\sum_{j\in\mathcal A(n)}n_j
+
\sum_{j\in\mathcal B(n)}(m_j-n_j-1)
\le
\sum_{\substack{j\ge0\\ n_j\ne0}}d_j.
\end{equation}
Indeed, if $j\in\mathcal A(n)$, then $n_j<d_j=n_j+1$, while if
$j\in\mathcal B(n)$ the second summand is exactly $d_j$.
Each individual term has the form $D_{M_j}$ multiplied by a character of
modulus one, and therefore its convolution with $f$ is bounded in absolute
value by $\mathcal Mf(x)$. It remains to bound uniformly the number of terms in this decomposition.

Let
$
P(n):=\#\{j:n_j\ne0,\ d_j\ge1\}.
$
Since 
$\log(1+d_j)\ge\log2 $ whenever $d_j\ge1,$
the assumption
$\Vlog(n)\le K$ implies
\[
\sum_{\substack{j\ge0\\ n_j\ne0}}
\log(1+d_j)
\ge
P(n)\log 2,
\]
and
$$
P(n)\le\frac{K}{\log2}.
$$
Moreover, for every such $j$,
\[
\log(1+d_j)
\le
\sum_{\substack{i\ge0\\ n_i\ne0}}
\log(1+d_i)
\le
V_{\log}(n)
\le
K,  \ \ \ d_j\le e^K-1,
\]
and hence
\begin{equation*}
\sum_{\substack{j\ge0\\ n_j\ne0}}d_j
\le (e^K-1)P(n)\le
(e^K-1)\frac{K}{\log2}.
\end{equation*}
If $s$ denotes the number of maximal blocks of nonzero digits of $n$, then
\eqref{eq:vblocks} and $\Vlog(n)\le K$ give
$$
s=\frac{v(n)}2\le\frac K2.
$$
Inside each nonzero digit block, the maximal intervals of
$\mathcal B(n)$ can be separated only by indices belonging to
$\mathcal A(n)$.  Thus, if the \(s\) nonzero-digit blocks contain respectively
\(q_1,\ldots,q_s\) indices from \(\mathcal A(n)\), then
\[
R_{\mathcal B}
\le
\sum_{\nu=1}^s (q_\nu+1)
=
s+\sum_{\nu=1}^s q_\nu
=
s+\#\mathcal A(n).
\]
Therefore,
\begin{equation}\label{eq:B-runs}
R_{\mathcal B}\le s+\#\mathcal A(n)
\le \frac K2+\frac{K}{\log2}.
\end{equation}
Combining \eqref{eq:short-count}--\eqref{eq:B-runs}, we obtain
\[
2R_{\mathcal B}+Q(n)
\le
K+\frac{2K}{\log2}
+(e^K-1)\frac{K}{\log2}
=:A_K.
\]
Since
$$
S_nf(x)=\int_{G_m}f(x-t)D_n(t)\,d\mu(t),
$$ 
by using
\eqref{eq:Dn-ABC} and
\eqref{eq:full-block-telescope}, we obtain
\[
|S_nf(x)|\le (2R_{\mathcal B}+Q(n))\mathcal Mf(x)
\le A_K\mathcal Mf(x),
\]
which proves \eqref{eq:pointwise-domination}. The proof is complete.
\end{proof}

\begin{proof}[Proof of Theorem~\ref{thm:ae}]
By Proposition~\ref{prop:pointwise-domination} and
\eqref{eq:Vlog-bounded}, we can conclude that
$$
\sup_{k\in\N}|S_{\alpha_k}f(x)|
\le A_K\mathcal Mf(x).
$$
Therefore \eqref{eq:mart-max-weak} gives
\[
\sup_{\lambda>0}\lambda\,
\mu\left\{x:\sup_{k\in\N}|S_{\alpha_k}f(x)|>\lambda\right\}
\le C A_K\|f\|_1,
\]
which proves \eqref{eq:weak-subsequence}.

To prove almost everywhere convergence, let $P$ be a Vilenkin polynomial.
Since $\alpha_k\to\infty$, one has
$
S_{\alpha_k}P=P
$
for all sufficiently large $k$. Vilenkin polynomials are dense in
$L^1(G_m)$. 

Fix $f\in L^1(G_m)$ and $\varepsilon>0$. For every Vilenkin
polynomial $P$,
\[
\limsup_{k\to\infty}|S_{\alpha_k}f-f|
\le
\sup_k|S_{\alpha_k}(f-P)|+|f-P|, \ \ \ \ \text{a.e.}
\]
Hence, by \eqref{eq:weak-subsequence} and Chebyshev's inequality,
\[
\begin{aligned}
&\mu\left\{x:
\limsup_{k\to\infty}|S_{\alpha_k}f(x)-f(x)|>2\varepsilon\right\}\\
&\le
\mu\left\{x:\sup_k|S_{\alpha_k}(f-P)(x)|>\varepsilon\right\}
+
\mu\{|f-P|>\varepsilon\}\\
&\le
\frac{C_K+1}{\varepsilon}\|f-P\|_1.
\end{aligned}
\]
Letting $P\to f$ in $L^1(G_m)$ shows that the measure on the left is zero.
Since $\varepsilon>0$ is arbitrary, \eqref{eq:ae-subsequence} follows.
\end{proof}

\begin{remark}
Theorem~\ref{thm:ae} recovers, in particular, the classical almost
everywhere convergence of the martingale subsequence $S_{M_k}f$ for every
$f\in L^1(G_m)$. In the unbounded case it also shows why a logarithmic digit
criterion is more natural than a condition involving the linear quantity
$v^*$: the digits $n_j=1$ may have very large $v^*$-contribution when
$m_j$ is large, while the corresponding Dirichlet-kernel piece consists of
only one martingale kernel.
\end{remark}

\end{document}